\newtheorem{theorem}{Theorem}[section]
\newtheorem{proposition}[theorem]{Proposition}
\newtheorem{lemma}[theorem]{Lemma}
\newtheorem{corollary}[theorem]{Corollary}
\newtheorem{remark}[theorem]{Remark}
\newcommand{\ble}{\begin{lemma}}
\newcommand{\ele}{\end{lemma}}
\newcommand{\be}{\begin{equation*}}
\newcommand{\ee}{\end{equation*}}
\newcommand{\bel}{\begin{equation}}
\newcommand{\eel}{\end{equation}}
\newcommand{\C}{\mathbb{C}}
\newcommand{\ep}{\varepsilon}
\newcommand{\fr}{\frac }
\newcommand{\lap}{\Delta}
\newcommand{\N}{\mathbb{N}}
\newcommand{\na}{\nabla}
\newcommand{\R}{\mathbb{R}}
\renewcommand{\to}{\rightarrow}
\newcommand{\To}{\longrightarrow}
\newcommand{\xip}{x_{i,p}}
\newcommand{\xjp}{x_{j,p}}
\newcommand{\mip}{\mu_{i,p}}
\newcommand{\upp}{u_p}
\def\sideremark#1{\ifvmode\leavevmode\fi\vadjust{\vbox to0pt{\vss% the remark
 \hbox to 0pt{\hskip\hsize\hskip1em%                          will appear only
 \vbox{\hsize2.1cm\tiny\raggedright\pretolerance10000%          on the side
  \noindent #1\hfill}\hss}\vbox to15pt{\vfil}\vss}}}%
\begin{document}

\numberwithin{equation}{section}
\parindent=0pt
\hfuzz=2pt
\frenchspacing

\title[]{Morse index and sign changing bubble towers for Lane-Emden problems}

\author[]{Francesca De Marchis, Isabella Ianni, Filomena Pacella}

\address{Francesca De Marchis, University of Roma {\em Tor Vergata}, Via della Ricerca
Scientifica 1, 00133 Roma, Italy}
\address{Isabella Ianni, Second University of Napoli, V.le Lincoln 5, 81100 Caserta, Italy}
\address{Filomena Pacella, University of Roma {\em Sapienza}, P.le Aldo Moro 8, 00185 Roma, Italy}

\thanks{2010 \textit{Mathematics Subject classification:} 35B05, 35B06, 35J91. }

\thanks{ \textit{Keywords}: superlinear elliptic boundary value problem, sign-changing solution, asymptotic analysis, bubble towers, Morse index.}

\thanks{Research partially supported by: FIRB project \textquotedblleft{Analysis and Beyond}\textquotedblright, PRIN $201274$FYK7$\_005$ grant and INDAM - GNAMPA}

\begin{abstract} We consider the semilinear Lane-Emden problem
\begin{equation}\label{problemAbstract}\left\{\begin{array}{lr}-\Delta u= |u|^{p-1}u\qquad  \mbox{ in }\Omega\\
u=0\qquad\qquad\qquad\mbox{ on }\partial \Omega
\end{array}\right.\tag{$\mathcal E_p$}
\end{equation}
where $p>1$ and $\Omega$ is a smooth bounded symmetric domain of $\R^2$.
We show that for families $(\upp)$ of sign-changing symmetric solutions of \eqref{problemAbstract} an upper bound on their Morse index implies concentration of the positive and negative part, $u_p^\pm$, at the same point, as $p\to+\infty$. Then an asymptotic analysis of $u_p^+$ and $u_p^-$ shows that the asymptotic profile of $(u_p)$, as $p\to+\infty$, is that of a tower of two different bubbles. \end{abstract}

\maketitle

\section{Introduction}\label{Introduction}
Let $\Omega$ be a smooth bounded domain of $\R^2$ and consider the Lane-Emden problem
\begin{equation}\label{problem}
\left\{\begin{array}{lr}-\Delta u= |u|^{p-1}u\qquad  \mbox{ in }\Omega\\
u=0\qquad\qquad\qquad\mbox{ on }\partial \Omega
\end{array}\right.
\end{equation}
where $p>1$.

\

The aim of the paper is to show, under some symmetry assumption on $\Omega$, a relation between the Morse index of sign-changing symmetric solutions of \eqref{problem} and their asymptotic profile, as $p\to+\infty$.

\

In order to state precisely our result we need to introduce some notations. For a given family $(u_p)$ of sign changing solutions of \eqref{problem} we denote by

\begin{itemize}
\item $u_p^+=\max (0,u_p)$, $u_p^-=-\min(0, u_p)$
\item $\mathcal{N}_p^{\pm}\subset\Omega$ the positive/negative nodal domain of $u_p$, i.e. $\mathcal{N}_p^{\pm}=\{x\in\Omega:\upp(x)\gtrless0\}$
\item $NL_p$ the nodal line of $\upp$, i.e. $NL_p=\{x\in\Omega:\upp(x)=0\}$
\item $x_p^{\pm}$ the maximum/minimum point in $\Omega$ of $u_p$, i.e. $u_p(x_p^{\pm})=\pm\|u_p^{\pm}\|_{\infty}$
\item $\mu_p^{\pm}:=\frac{1}{\sqrt{p|u_p(x_p^{\pm})|^{p-1}}}$
\item $\widetilde{\Omega}_p^{\pm}:=\frac{\Omega-x_p^{\pm}}{\mu_p^{\pm}}=\{x\in\mathbb R^2: x_p^{\pm}+\mu_p^{\pm}x\in \Omega\}$.
\end{itemize}

\

Recalling that the Morse index $m(v)$ of a solution $v$ of a problem of type \eqref{problem} is the number of the negative eigenvalues of the linearized operator at $v$, we state our main result:
\begin{theorem}\label{teoremaIndMor}
Let $\Omega\subset\R^2$ be a simply connected bounded smooth domain containing the origin $O$ and invariant under the action of a cyclic group $G$ of rotations about the origin with order $|G|\geq2$.
Let $(\upp)$ be a family of sign changing $G$-symmetric  solutions of \eqref{problem} with two nodal regions such that
\begin{equation}\label{assumptionEnergyGenerale}
p\int_{\Omega}|\nabla u_p|^2 dx\stackrel{p\to+\infty}\longrightarrow\beta,\quad \textrm{for some $\beta\in\mathbb R$,}
\end{equation}
and
\begin{equation}\label{assumptionMorseu}
m(\upp)<|G|+1.
\end{equation}
Then, assuming w.l.o.g. that $\|u_p\|_{\infty}=\|u_p^+\|_{\infty}$, we have
\begin{itemize}
\item[i)] $|x_p^{\pm}|\rightarrow O \ \mbox{ as }\ p\rightarrow +\infty$,
\item[ii)] $NL_p$ shrinks to the origin as $p\rightarrow +\infty$,
\item[iii)] the rescaled function $v_p^+(x):=p\frac{u_p(x_p^++\mu_p^+ x)-u_p(x_p^+)}{u_p(x_p^+)}$ defined in $\widetilde{\Omega}_p^{+}$ converges (up to a subsequence) to the regular solution $U$ of \begin{equation}
\label{LiouvilleEquation}
\left\{
\begin{array}{lr}
-\Delta U=e^U\quad\mbox{ in }\R^2\\
\int_{\R^2}e^Udx= 8\pi.
\end{array}
\right.
\end{equation}
 with $U(0)=0$, in $C^1_{loc}(\R^2)$,
\item[iv)] the rescaled function $v_p^-(x):=p\frac{u_p(x_p^-+\mu_p^- x)-u_p(x_p^-)}{u_p(x_p^-)}$ defined in $\widetilde{\Omega}_p^{-}$ converges in
$C^1_{loc}(\R^2\setminus\{x_{\infty}\})$ (up to a subsequence) to a singular solution  $V$ of
\begin{equation}
\label{LiouvilleSingularEquation}
\left\{
\begin{array}{lr}
-\Delta V=e^V+ H\delta_{x_\infty}\quad\mbox{ in }\R^2\\
\int_{\R^2}e^Vdx<\infty
\end{array}
\right.
\end{equation}
where $H$ is a negative suitable constant and $\delta_{x_\infty}$ is the Dirac measure centered at ${x_\infty}=-\lim_{p\rightarrow +\infty}\frac{x_p^-}{\mu_p^-}\neq0$,
\item[v)] $\sqrt{p}u_p\rightarrow 0$ in $C^1_{loc}(\R^2\setminus\{0\})$ as $p\rightarrow +\infty$.
\end{itemize}
\end{theorem}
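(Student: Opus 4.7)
The plan is to use the Morse index bound \eqref{assumptionMorseu} together with the $G$-symmetry of the solutions to force the two concentration points $x_p^\pm$ to collapse at the origin, and then to deduce the bubble tower picture from a standard rescaling analysis. More precisely, I would proceed in four stages.

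First, starting from the energy bound \eqref{assumptionEnergyGenerale} and the two-nodal-domain assumption, I would invoke the now classical asymptotic analysis for sign-changing Lane-Emden families in dimension $2$ (as developed in the authors' previous work) to extract the following preliminary picture: up to a subsequence, $p|\nabla u_p^\pm|^2$ concentrate as measures at finitely many points, and around each local extremum of $u_p^\pm$ a Liouville bubble develops at the appropriate scale $\mu_p^\pm$. In particular, at $x_p^+$ (resp.\ $x_p^-$) one already has $C^1_{loc}$ convergence of $v_p^\pm$ to a regular Liouville profile on the region where the opposite sign part of $u_p$ is negligible.

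The core of the argument is the Morse index step. Suppose, for contradiction, that $x_p^+\not\to O$. Then $|x_p^+|$ stays bounded away from $0$ along a subsequence, and by $G$-symmetry $u_p$ attains its positive maximum at the $|G|$ distinct points $\{g\, x_p^+:\, g\in G\}$, at each of which a full positive bubble forms. At every such point one constructs a test function by pulling back, through the rescaling $x\mapsto x_p^+ + \mu_p^+ x$ composed with $g$, a compactly supported cutoff of a negative eigenfunction of $-\Delta - e^U$ on $\R^2$ (such an eigenfunction exists because the regular Liouville solution $U$ has Morse index one). For $p$ large the $|G|$ resulting test functions have pairwise disjoint supports, hence are linearly independent, and each makes the quadratic form $Q_p(\varphi)=\int_\Omega(|\nabla \varphi|^2 - p|u_p|^{p-1}\varphi^2)\,dx$ strictly negative. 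A further independent negative direction is produced by an analogous cutoff localized at $x_p^-$ (at scale $\mu_p^-$) using the bubble of $u_p^-$, whose support can be arranged to be disjoint from the previous ones since $x_p^-\not\in\{g\,x_p^+\}$ (the two points lie in different nodal regions). Altogether this forces $m(u_p)\geq |G|+1$, contradicting \eqref{assumptionMorseu}. The same contradiction is obtained if $x_p^-\not\to O$, so (i) follows. Item (ii) is then a consequence of (i), the two-nodal-domain assumption and the $G$-invariance: the nodal line must be $G$-invariant, bound a simply connected region containing both $x_p^+$ and $x_p^-$ accumulating at $O$, and therefore shrink to $O$.

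Once both concentrations are at $O$ with $\|u_p\|_\infty=\|u_p^+\|_\infty$ (hence $\mu_p^+\leq \mu_p^-$), the rescaling about $x_p^+$ at the finer scale $\mu_p^+$ does not see $u_p^-$ and, by the standard single-bubble argument combined with the mass-quantization $\int e^U=8\pi$ inherited from the energy bound, yields (iii). For (iv), rescale $u_p$ around $x_p^-$ at scale $\mu_p^-$: the bubble of $u_p^+$, concentrated at $x_p^+$ and of characteristic size $\mu_p^+\ll \mu_p^-$, is seen in the new variable at the point $(x_p^+-x_p^-)/\mu_p^-$, which (after passing to a subsequence) converges to a nonzero limit $x_\infty=-\lim x_p^-/\mu_p^-$ (the limit is nonzero because $x_p^+$ and $x_p^-$ sit in different nodal regions separated by a nodal line shrinking at a rate controlled by Step~2). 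In the limiting equation for $v_p^-$, the contribution of $|u_p|^{p-1}u_p$ coming from the positive nodal region localizes, thanks to the scale separation, as a Dirac mass at $x_\infty$; its coefficient $H$ is negative because on the positive nodal region $|u_p|^{p-1}u_p>0$ while one divides by $u_p(x_p^-)<0$ in the definition of $v_p^-$. This produces the singular Liouville equation \eqref{LiouvilleSingularEquation} with $H<0$. Finally (v) follows from the bubble decay estimates away from $O$ together with the energy bound \eqref{assumptionEnergyGenerale}.

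\emph{Main obstacle.} The delicate step is the Morse index argument of Step~2: one has to (a) establish a sharp enough $C^1$-bubble expansion to certify a strictly negative direction of $Q_p$ from a cutoff of a Liouville eigenfunction, (b) guarantee linear independence of the $|G|$ orbit test functions together with the extra one coming from $x_p^-$ (which requires verifying that $x_p^-$ and the orbit of $x_p^+$ separate at a scale much larger than $\mu_p^\pm$), and (c) carry out the matched-asymptotics passage to the singular Liouville limit in (iv), which requires a careful control on the interaction of the two bubbles at the overlap of their scales.
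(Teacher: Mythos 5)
Your Step 2 is, in essence, the paper's own argument: Proposition \ref{MaxVaazeroMorseBasso} produces $|G|$ negative directions for the quadratic form, supported in disjoint balls $B_{\bar r\mu_{i,p}}(g^j x_{i,p})$ around the $G$-orbit of a concentration point, the negative direction on each ball being certified by Proposition \ref{LemmaFunzioneBella} (via the explicit dilation-type function $w_{i,p}=x\cdot\nabla z_{i,p}+\tfrac{2}{p-1}z_{i,p}+\tfrac{2p}{p-1}$ in the kernel of the rescaled linearized operator, rather than a cutoff eigenfunction of $-\Delta-e^U$; this difference is cosmetic). The paper counts these directions against $m(u_p^{\pm})$ in the corresponding nodal domain and reduces \eqref{assumptionMorseu} to $\max\{m(u_p^+),m(u_p^-)\}<|G|$ through the inequality $m(u_p)\geq m(u_p^{\pm})+1$; your extra test function localized at $x_p^-$ plays exactly the role of this $+1$. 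One substantive difference: the contradiction must be run against the hypothesis $|x_{i,p}|/\mu_{i,p}\to\infty$, not merely $x_p^{+}\not\to O$. The same construction still works (the orbit points remain at mutual distance $\gg\mu_{i,p}$), but the stronger conclusion $|x_p^{\pm}|/\mu_p^{\pm}\leq C$ is indispensable later: it is what makes $x_\infty=-\lim x_p^-/\mu_p^-$ a finite point and keeps the positive bubble at bounded distance from $x_p^-$ in the $\mu_p^-$-variable. As written, your Step 2 only yields item (i).

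The genuine gaps are in what you treat as routine afterwards. First, item (ii) does not follow from (i) plus $G$-invariance: a nodal line separating two regions whose extremum points both converge to $O$ can a priori still be a large curve; the paper needs the much stronger Proposition \ref{prop:NodalLineShrinks} ($\max_{y\in NL_p}|y|=o(\mu_p^-)$), a separate substantial result which is also what guarantees that the translated rescaling $s_p^-$ solves the unperturbed equation away from the origin. Second, the statement $\ell:=\lim|x_p^-|/\mu_p^-\neq0$ (Proposition \ref{prop:NLp+l>0}) is the crux of the tower structure: if $\ell=0$ the rescaling at $x_p^-$ would produce a second regular bubble and the singular profile in (iv) would disappear. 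Your justification --- that $x_p^+$ and $x_p^-$ lie in different nodal regions separated by the nodal line --- does not work, because Corollary \ref{cor:nonvedoNL} gives $dist(\cdot,NL_p)\gg\mu$ only for the concentration points $x_{i,p}$, which all lie in $\mathcal{N}_p^+$ by Proposition \ref{regioneNodaleInterna}, and not for $x_p^-$. Third, in (iv), passing from \virg{$-\Delta V=e^V$ in $\R^2\setminus\{0\}$ with $e^V\in L^1$} to the Dirac-mass equation with $H<0$ requires the singularity classification of Chen--Li and Chou--Wan together with an argument excluding the removable-singularity case; the paper excludes that case precisely by contradiction with $\ell>0$, so this step cannot be decoupled from the previous one.
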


The assertions of the above theorem show that both $u_p^+$ and $u_p^-$ concentrate at the same point which is the origin and, after suitable rescalings, they have the limit profile of a regular and a singular solution of the Liouville equation in the plane. So the limit profile of $u_p$, as $p\to+\infty$, is that of a \emph{tower of two different bubbles}.

\begin{remark}
According to the classification in \cite{PrajapatTarantello}, if $H\notin-4\pi\N$, the solutions of \eqref{LiouvilleSingularEquation} are radial with respect to $x_\infty$, while, if $H\in-4\pi\N$, they can be either radial with respect to $x_\infty$ or invariant under the action of a cyclic group of rotations of order $\tfrac{H}{4\pi}+1$ (which in our case should be at least $|G|$) about $x_\infty$. We refer to Proposition \ref{prop:scalingNegativo} for further details.
\end{remark}

The first results for problem \eqref{problem} about the existence of sign changing solutions whose positive and negative part concentrate at the same point have been obtained in \cite{GrossiGrumiauPacella2} for nodal radial solutions in the ball and in \cite{DeMarchisIanniPacella2} for nodal symmetric solutions similar to those considered in Theorem \ref{teoremaIndMor}. As compared to \cite{DeMarchisIanniPacella2} the main difference is that there a relation between the asymptotic energy $\beta$ (see \eqref{assumptionEnergyGenerale}) of the solutions and the order of the group $G$ was exploited, while here we use the bound \eqref{assumptionMorseu} on the Morse index.

We believe that this connection between the Morse index and the limit profile of the solutions is the real novelty of our result. It shows once again a deep relation between the information obtained by the linearization and the qualitative properties of the solutions.

Our assumption \eqref{assumptionMorseu} also allows to weaken the hypothesis on the order of the symmetry group $G$ which, in \cite{DeMarchisIanniPacella2}, was assumed to be: $|G|\geq 4e$.
%This is significant in view of Remark \ref{remark:DIP2}.
On the other side it should be said that, generally, energy conditions are easier to be checked than Morse index bounds. Indeed in \cite{DeMarchisIanniPacella1} solutions satisfying the energy bound stated in \cite{DeMarchisIanniPacella2} have been proved to exist. Another difference with the result in \cite{DeMarchisIanniPacella2} is that here for the asymptotic analysis of $u^-_p$ we are not able to exclude the non radiality of $v^-_p$.

\

Let us observe that the assumptions of Theorem \ref{teoremaIndMor} are reasonable since the $G$-symmetric solutions found recently in \cite{DeMarchisIanniPacella1} in the case $|G|\geq4$, have two nodal regions, satisfy \eqref{assumptionEnergyGenerale} and we conjecture, supported by numerical evidence and asymptotic computations, that their Morse index should be $4$.  Let us recall that for some symmetric sign changing solutions a lower bound on their Morse index can be obtained, as proved in \cite{AftalionPacella}. This shows in particular that the Morse index of  sign changing radial solutions in a ball is at least $4$ and we expect that in the case of least energy radial sign-changing solutions in a ball, their Morse index is exactly $4$, as we are going to prove in a paper in  preparation.

\

The Theorem \ref{teoremaIndMor} will follow from a slightly more general result where the assumption \eqref{assumptionMorseu} is substituted by the condition
\begin{equation}\label{assumptionMorseIndex}
\max\{m(u_p^+), m(u_p^-)\}<|G|.
\end{equation}
Indeed, since the Morse index $m(\upp)$ of a solution $\upp$ of \eqref{problem} is always larger or equal to $m(u_p^\pm)+1$, it is obvious that \eqref{assumptionMorseu} implies \eqref{assumptionMorseIndex}.

\begin{theorem}\label{teoremaIndMorHpDeb}
Let $\Omega\subset\R^2$ a bounded simply connected smooth domain with the origin $O$ and invariant under the action of a cyclic group $G$ of rotations about the origin with $|G|\geq2$.\\
If $(\upp)$ is a family of sign changing solutions of \eqref{problem} with two nodal regions satisfying \eqref{assumptionEnergyGenerale} and \eqref{assumptionMorseIndex} then the assertions $(i)-(v)$ hold.
\end{theorem}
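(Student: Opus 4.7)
The plan is to combine a blow-up analysis around the points $x_p^{\pm}$ with a symmetry/Morse-index argument that forces both concentration points to lie at the origin. Under the energy bound \eqref{assumptionEnergyGenerale}, the rescalings $v_p^{\pm}$ at the scales $\mu_p^{\pm}$ are bounded above by $0$ on compact sets and, via a Brezis--Merle/Chen--Li analysis together with pointwise decay estimates already developed in \cite{DeMarchisIanniPacella2}, converge along subsequences to solutions of a Liouville-type equation with finite integral mass. The novel task is then to locate the blow-up points using the Morse index constraint and to identify the two limit profiles as the regular/singular pair forming a bubble tower.

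The central new step is to prove $x_p^{\pm} \to O$. Suppose, for contradiction, $x_p^+ \to x_0 \neq O$. Since $O$ is the unique fixed point of $G$, the orbit $G\cdot x_0$ consists of $|G|$ distinct points, and by $G$-invariance of $u_p$ each of them is a maximum point of $u_p^+$ at scale $\mu_p^+$ around which $v_p^+$ converges to the same regular Liouville bubble. Localizing $u_p^+$ by cutoffs $\chi_i$ supported on disjoint small disks around the $|G|$ orbit points produces functions $\varphi_i = \chi_i u_p^+ \in H^1_0(\mathcal{N}_p^+)$ that are pairwise $L^2$-orthogonal and on which the quadratic form of the linearization $-\Delta - p(u_p^+)^{p-1}$ is strictly negative for $p$ large (starting from the identity $\int u_p^+ L u_p^+ = (1-p)\int(u_p^+)^{p+1} < 0$ and decomposing it, up to lower-order errors, into the sum of bubble contributions on disjoint supports). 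This yields $m(u_p^+) \geq |G|$, contradicting \eqref{assumptionMorseIndex}. The same argument applied to $u_p^-$ gives $x_p^- \to O$, proving (i); assertion (ii) then follows from the two-nodal-region hypothesis, as the nodal region containing $x_p^-$ has diameter $O(\mu_p^-) \to 0$.

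With (i) in hand, (iii) comes from the rescaling at $x_p^+$: the normalization $\|u_p\|_\infty = \|u_p^+\|_\infty$ gives $v_p^+ \leq 0$ on bounded sets, and elliptic regularity combined with mass quantization yield $v_p^+ \to U$ in $C^1_{loc}$, with $U$ the regular Liouville solution satisfying $U(0)=0$ and $\int_{\R^2} e^U = 8\pi$. For (iv), the essential point is that $\|u_p^-\|_\infty < \|u_p^+\|_\infty$ forces $\mu_p^-/\mu_p^+ \to +\infty$; hence, in the rescaling at $x_p^-$ at scale $\mu_p^-$, the positive bubble of $u_p^+$ collapses to a single point $x_\infty = -\lim x_p^-/\mu_p^- \neq 0$ while carrying a definite amount of mass which, from the perspective of the rescaled equation for $v_p^-$, enters with opposite sign and produces in the limit the singular source $H\delta_{x_\infty}$ with $H<0$; the Prajapat--Tarantello classification then identifies $V$. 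Finally, (v) follows from the energy bound \eqref{assumptionEnergyGenerale} combined with away-from-blow-up decay estimates for the Lane--Emden equation. The principal obstacle is the Morse-index step: constructing $|G|$ linearly independent directions on which the form is strictly negative requires sharp control of $v_p^+$ around each orbit point and a cutoff that simultaneously preserves disjoint supports and the strict negativity of the quadratic form after restriction to $\mathcal{N}_p^+$.
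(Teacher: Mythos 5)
Your central mechanism --- transporting a negative direction of the linearized operator along the $G$-orbit of a concentration point to obtain $|G|$ negative directions with disjoint supports, contradicting \eqref{assumptionMorseIndex} --- is exactly the one the paper uses (Propositions \ref{LemmaFunzioneBella} and \ref{MaxVaazeroMorseBasso}). But you run it under the contradiction hypothesis $x_p^+\to x_0\neq O$, which can only yield the qualitative conclusion $x_p^{\pm}\to O$. The paper instead runs it under the hypothesis $|x_{i,p}|/\mu_{i,p}\to+\infty$: the orbit points $g^jx_p$ are mutually at distance $2|x_p|\sin(\pi/|G|)$, so they carry \emph{disjoint} balls of radius $\bar r\mu_{i,p}$ (on which the localized first eigenvalue is negative, via the explicit dilation-generated solution $w_{i,p}=x\cdot\nabla z_{i,p}+\tfrac{2}{p-1}z_{i,p}+\tfrac{2p}{p-1}$ of the rescaled linearized equation) precisely when $|x_p|/\mu_{i,p}\to+\infty$, whether or not $x_p\to O$. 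The resulting quantitative bound $|x_p^-|/\mu_p^-\leq C$ is indispensable for (iv): without it the point $x_\infty=-\lim_p x_p^-/\mu_p^-$ need not even exist. Your fixed-size disjoint disks require the orbit points to stay uniformly separated, i.e.\ $x_0\neq O$, so your argument cannot be upgraded to the statement actually needed. Moreover you never address why $x_\infty\neq0$, i.e.\ why $|x_p^-|/\mu_p^-\not\to0$; the paper flags this as a crucial point (Proposition \ref{prop:NLp+l>0}), and it is what forces the limit of $v_p^-$ to be a genuinely singular solution rather than a second regular bubble.

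Two further problems. First, your derivation of (ii) is incorrect: by Proposition \ref{regioneNodaleInterna} one has $O\in\mathcal{N}_p^+$ and $NL_p\cap\partial\Omega=\emptyset$, so the nodal region containing $x_p^-$ is the \emph{outer} one, whose closure contains $\partial\Omega$ and whose diameter does not tend to zero; what shrinks is $NL_p$ (hence $\mathcal{N}_p^+$), at the rate $o(\mu_p^-)$, and this is the nontrivial Proposition \ref{prop:NodalLineShrinks}, resting on the preliminary facts $NL_p\cap\partial\Omega=\emptyset$ and $O\notin NL_p$ (Proposition \ref{prop:nonTocca}) that your outline never invokes. Second, the claim that $\|u_p^-\|_\infty<\|u_p^+\|_\infty$ ``forces'' $\mu_p^-/\mu_p^+\to+\infty$ does not follow from the strict inequality alone (both sup-norms tend to the same limit), and the paper's route to the Dirac mass is different: it shows the translated rescaling $s_p^-$ converges in $C^1_{loc}(\R^2\setminus\{0\})$ to a solution of the Liouville equation with $e^V\in L^1(\R^2)$, and excludes a removable singularity because that would force $|x_p^-|/\mu_p^-\to0$, contradicting Proposition \ref{prop:NLp+l>0}. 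Your cutoff construction $\varphi_i=\chi_iu_p^+$ is also only sketched: one must check that $(p-1)\int\chi_i^2(u_p^+)^{p+1}$ dominates the error $\int u_p^2|\nabla\chi_i|^2$, which requires the local mass quantization near each orbit point.
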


The proof of Theorem \ref{teoremaIndMorHpDeb} (and hence of Theorem \ref{teoremaIndMor}) is based on several results proved in \cite{DeMarchisIanniPacella2}. Let us point out that a crucial initial step is to show that the solutions considered have the property that their nodal line neither touches the boundary of $\Omega$, nor passes through the origin, i.e. for $\upp$ holds:
\begin{equation}\label{hpNonToccaBordoECentro}
NL_p\cap\partial\Omega=\emptyset\quad\textrm{and}\quad O\not\in NL_p.
\end{equation}
Since the solution $\upp$ considered in the above theorem have two nodal regions, $\eqref{hpNonToccaBordoECentro}$ is a consequence of the following general result whose proof is exactly the same as that of \cite[Lemma 4.1]{DeMarchisIanniPacella1} and \cite[Lemma 4.3]{DeMarchisIanniPacella1} (written there for $|G|\geq4$).

\begin{proposition}
\label{prop:nonTocca}
 If $G$ is a cyclic group of rotations about the origin with $|G|\geq2$ then any $G$-symmetric nodal solution $u_p$ of \eqref{problem} such that $\sharp (u_p)\leq |G|$ satisfies \eqref{hpNonToccaBordoECentro}, where $\sharp(u_p)$ is the number of nodal domains of $u_p$.
\end{proposition}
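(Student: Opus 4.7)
The plan is to adapt the arguments of Lemmas 4.1 and 4.3 of \cite{DeMarchisIanniPacella1}, which establish exactly these conclusions under the stronger hypothesis $|G|\geq 4$. A careful reading shows those arguments never use more than $|G|\geq 2$, so they apply verbatim to the present setting. I argue by contradiction on each of the two claims in \eqref{hpNonToccaBordoECentro}.

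\textbf{Non-touching of the boundary.} Suppose there is $y_0\in NL_p\cap\partial\Omega$. Since $O$ lies in the interior of $\Omega$ and is the unique fixed point of $G$, we have $y_0\neq O$ and the orbit of $y_0$ under $G$ provides $|G|$ distinct boundary zeros of $u_p$. At each such $y_0$, local analysis of the nodal set (using Hopf's lemma applied to the two adjacent nodal domains, or equivalently extending $u_p$ by reflection across $\partial\Omega$ and invoking the regularity theory of nodal sets of $2$D linear elliptic equations) shows that $NL_p$ contains a regular $C^1$ arc entering $\Omega$ transversally to $\partial\Omega$. The resulting $G$-invariant bunch of $|G|$ incoming arcs, combined with simple connectedness of $\Omega$ and a planar Euler-characteristic count of the nodal graph, forces the number of nodal domains to be at least $|G|+1$, contradicting $\sharp(u_p)\leq|G|$.

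\textbf{The origin is not on the nodal line.} Assume for contradiction $u_p(O)=0$. Since $u_p$ is $C^2$ and $-\Delta u_p=|u_p|^{p-1}u_p$ vanishes to order $kp$ at $O$ whenever $u_p$ does so to order $k$, the first non-trivial Taylor coefficient $P_k$ of $u_p$ at $O$ is a harmonic polynomial of degree $k\geq 1$. Moreover $P_k$ inherits the $G$-symmetry of $u_p$, and a nonzero harmonic polynomial in $\mathbb{R}^2$ invariant under rotation by $2\pi/|G|$ must have degree divisible by $|G|$; hence $k\geq|G|$. Classical theory of nodal sets in dimension two then yields $2k\geq 2|G|$ smooth nodal arcs emanating from $O$, dividing a punctured neighborhood of $O$ into $2k$ alternating-sign local sectors. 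A planar topological analysis, using the $G$-equivariance of the nodal graph and the simple connectedness of $\Omega$ (the analogue of \cite[Lemma 4.3]{DeMarchisIanniPacella1}), then promotes this local count to the global bound $\sharp(u_p)\geq|G|+1$, again a contradiction.

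\textbf{Main obstacle.} In both steps the subtle point is the passage from the local nodal structure (arcs entering at boundary zeros, respectively emanating from $O$) to the global inequality $\sharp(u_p)\geq|G|+1$. One must verify that the $|G|$-fold symmetric arrangement of arcs cannot merge into fewer than $|G|+1$ connected nodal regions without violating unique continuation, Hopf's lemma, or simple connectedness of $\Omega$. This planar-topological bookkeeping is carried out explicitly in \cite[Lemmas 4.1, 4.3]{DeMarchisIanniPacella1}, and a direct inspection shows it does not exploit $|G|\geq 4$; hence the same proof covers $|G|\geq 2$.
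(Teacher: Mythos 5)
Your proposal takes essentially the same route as the paper, whose entire proof of Proposition \ref{prop:nonTocca} consists of the remark that the arguments of \cite[Lemmas 4.1 and 4.3]{DeMarchisIanniPacella1} (stated there for $|G|\geq4$) carry over verbatim to $|G|\geq2$; your sketch of the mechanism (vanishing order at $O$ forced up to $k\geq|G|$ by $G$-invariance of the leading harmonic Taylor polynomial, local nodal-arc structure at boundary zeros, and a planar Euler-characteristic count yielding $\sharp(u_p)\geq|G|+1$) is a faithful reconstruction of those arguments. The one point to watch is that in the boundary step the passage from $|G|$ touching points to $\sharp(u_p)\geq|G|+1$ is not a one-line count (e.g.\ for $|G|=2$ a single $G$-invariant crossing arc bounds only two regions, and one must then observe that such an arc necessarily contains the fixed point $O$, reducing to the second case), but you explicitly identify this bookkeeping as the crux and defer it to the cited lemmas, exactly as the paper does.
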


We believe that \eqref{hpNonToccaBordoECentro} is the crucial qualitative property of the solutions which yields the concentration of $u^+_p$ and $u^-_p$ at the same point.

Moreover let us observe that for sign-changing solutions with any number of nodal regions in any $G$-symmetric domain $\Omega$ the condition \eqref{assumptionMorseu} implies the properties in \eqref{hpNonToccaBordoECentro}. Indeed, we know (cfr. \cite{BenciFortunato}) that
$$\sharp (u_p)\leq m(u_p).$$
hence  \eqref{assumptionMorseu} yields
\begin{equation}
\sharp(u_p)\leq m(u_p)\leq |G|,
\end{equation}
so that again by  Proposition \ref{prop:nonTocca} we get \eqref{hpNonToccaBordoECentro}.

\

The outline of the paper is as follows. In Section \ref{section:PreliminaryGeneral} we recall or prove some results in general bounded, not necessarily symmetric domains. In Section \ref{section:Proof} we give the proof of Theorem \ref{teoremaIndMorHpDeb} as consequence of other results concerning the asymptotic analysis of the negative parts $(\upp^-)$ in $G$-symmetric domains.

\section{Preliminary results in general bounded domains}\label{section:PreliminaryGeneral}

In order to prove Theorem \ref{teoremaIndMorHpDeb}, we follow the scheme of the proof of \cite[Theorem 1.2]{DeMarchisIanniPacella2}, showing that all the steps can be re-obtained under the new assumptions of this paper.

\

We start introducing some notations and recalling some results obtained in \cite{DeMarchisIanniPacella2} on the asymptotic behavior of a family $(\upp)$ of solutions of \eqref{problem}, in a general smooth bounded  domain $\Omega$, satisfying the energy condition \eqref{assumptionEnergyGenerale}.

\

Given a family $(u_p)$ of solutions of \eqref{problem}
and assuming that there exist  $n\in\N\setminus\{0\}$ families of points $(\xip)$, $i=1,\ldots,n$  in $\Omega$ such that
\begin{equation}
\label{muVaAZero}
p|\upp(\xip)|^{p-1}\to+\infty\ \mbox{ as }\ p\to+\infty,
\end{equation}
we define the parameters $\mip$ by
\bel\label{mip}
\mip^{-2}=p |\upp(\xip)|^{p-1},\ \mbox{ for all }\ i=1,\ldots,n.
\eel
By \eqref{muVaAZero} it is clear that $\mip\to0$ as $p\to+\infty$ and that
\begin{equation}\label{RemarkMaxCirca1}
 \forall \epsilon>0 \;\: \exists\, p_{i,\epsilon}\ \mbox{ such that }\
 |u_p(\xip)|\geq 1-\epsilon,\ \ \forall p\geq p_{i,\epsilon}.
\end{equation}
Then we define the concentration set
\bel\label{S}
\mathcal{S}=\left\{\lim_{p\to+\infty}\xip,\,i=1,\ldots,n\right\}\subset\bar\Omega
\eel
and the function
\bel\label{RNp}
R_{n,p}(x)=\min_{i=1,\ldots,n} |x-\xip|, \ \forall x\in\Omega.
\eel

Finally we introduce the following properties:
\begin{itemize}
\item[$(\mathcal{P}_1^n)$] For any $i,j\in\{1,\ldots,n\}$, $i\neq j$,
\[
\lim_{p\to+\infty}\fr{|\xip-\xjp|}{\mip}=+\infty.
\]
\item[$(\mathcal{P}_2^n)$] For any $i=1,\ldots,n$,
\[
v_{i,p}(x):=\fr{p}{\upp(\xip)}(\upp(\xip+\mip x)-\upp(\xip))\To U(x)
\]
in $C^1_{loc}(\R^2)$ as $p\to+\infty$, where
\begin{equation}\label{v0}
U(x)=\log\left(\fr1{1+\fr18 |x|^2}\right)^2
\end{equation}
is the solution of $-\lap U=e^{U}$ in $\R^2$, $U\leq 0$, $U(0)=0$ and $\int_{\mathbb{R}^2}e^{U}=8\pi$.
\item[$(\mathcal{P}_3^n)$] There exists $C>0$ such that
\[
p R_{n,p}(x)^2 |\upp(x)|^{p-1}\leq C
\]
for all $p$ sufficiently large and all $x\in \Omega$.
\end{itemize}

\

The following results have been obtained in \cite{DeMarchisIanniPacella2}.

\

\ble\label{lemma:BoundEnergia}
Let $(\upp)$ be a family of solutions to \eqref{problem} satisfying \eqref{assumptionEnergyGenerale}. Then
\begin{itemize}
\item[\emph{$(i)$}] If $\upp$ changes sign, then  $\|u_p^\pm\|_{L^\infty(\Omega)}^{p-1}\geq \lambda_1$ where $\lambda_1:=\lambda_1(\Omega)$ is the first eigenvalue of the operator $-\Delta$ in $H^1_0(\Omega)$. In particular for the points $x_p^{\pm}$, where the maximum and the minimum are achieved, the analogous of \eqref{muVaAZero} and \eqref{RemarkMaxCirca1} hold.

\item[\emph{$(ii)$}] If, for $n\in\N\setminus\{0\}$, the properties $(\mathcal{P}_1^n)$ and $(\mathcal{P}_2^n)$ hold for families $(\xip)_{i=1,\ldots,n}$ of points satisfying \eqref{muVaAZero}, then
\[
p\int_\Omega |\na\upp|^2\,dx\geq8\pi\sum_{i=1}^n \alpha_i^2+o_p(1)\ \mbox{ as }p\rightarrow +\infty,
\]
where  $\alpha_i:=\liminf_{p\to+\infty}|\upp(\xip)|$.
\end{itemize}
\ele

\begin{proof}
See \cite[Lemma 2.1]{DeMarchisIanniPacella2}.
\end{proof}

\begin{proposition}\label{prop:x1N}
Let $(\upp)$ be a family of solutions to \eqref{problem} and assume that \eqref{assumptionEnergyGenerale} holds. Then there exist $k\in\N\setminus\{0\}$ and $k$ families of points $(\xip)$ in $\Omega$  $i=1,\ldots, k$ such that, after passing to a sequence, \eqref{muVaAZero}, $(\mathcal{P}_1^k)$, $(\mathcal{P}_2^k)$, and $(\mathcal{P}_3^k)$ hold. Moreover, given any family of points $x_{k+1,p}$, it is impossible to extract a new sequence from the previous one such that $(\mathcal{P}_1^{k+1})$, $(\mathcal{P}_2^{k+1})$, and $(\mathcal{P}_3^{k+1})$ hold with the sequences $(\xip)$, $i=1,\ldots,k+1$. At last, we have
\begin{equation}\label{pu_va_a_zero}
\sqrt{p}\upp\to 0\quad\textrm{ in $C^1_{loc}(\bar\Omega\setminus\mathcal{S})$ as $p\to+\infty$.}
\end{equation}
\end{proposition}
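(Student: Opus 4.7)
The plan is to carry out an iterative concentration--compactness scheme, of the type developed by Druet and by Adimurthi--Grossi for 2D Liouville-type problems, adapted to the Lane--Emden setting via the rescaling in \eqref{mip}. The process selects families of points one at a time and is terminated by the energy bound \eqref{assumptionEnergyGenerale}.

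For the initialization I would take $x_{1,p}$ to be a point where $|u_p|$ attains its $L^\infty$-norm. Using $-\Delta u_p=|u_p|^{p-1}u_p$, testing against the first eigenfunction of $-\Delta$ in $H^1_0(\Omega)$ gives $\|u_p\|_\infty^{p-1}\geq \lambda_1(\Omega)$, so \eqref{muVaAZero} holds at $x_{1,p}$ and $\mu_{1,p}$ is well defined. Then I would consider the rescaling $v_{1,p}(x)=\tfrac{p}{u_p(x_{1,p})}\bigl(u_p(x_{1,p}+\mu_{1,p}x)-u_p(x_{1,p})\bigr)$ on $\tilde\Omega_{1,p}$, verify $v_{1,p}\le 0$ and $v_{1,p}(0)=0$, and show that it solves a uniformly elliptic equation whose right hand side converges to $e^{U}$; standard elliptic regularity together with the classification of entire solutions of $-\Delta U=e^U$ (with $\int e^U<\infty$ and $U(0)=0$, $U\leq 0$) yields the Liouville profile \eqref{v0}. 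Thus $(\mathcal{P}_2^1)$ holds; $(\mathcal{P}_1^1)$ is vacuous.

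For the inductive step, assuming $(\mathcal{P}_1^n)$ and $(\mathcal{P}_2^n)$ hold for $n$ families, I would set $L_{n,p}:=\sup_{x\in\Omega}pR_{n,p}(x)^{2}|u_p(x)|^{p-1}$. If $L_{n,p}$ stays bounded as $p\to +\infty$, then $(\mathcal{P}_3^n)$ holds and we stop, setting $k=n$. Otherwise, up to a subsequence $L_{n,p}\to+\infty$ and I would choose $x_{n+1,p}$ as a point realizing $L_{n,p}$ (or at least comparable to it). A direct computation shows that $p|u_p(x_{n+1,p})|^{p-1}\to+\infty$ and that $|x_{n+1,p}-x_{i,p}|/\mu_{i,p}\to+\infty$ for every $i\leq n$, which in turn implies $|x_{n+1,p}-x_{i,p}|/\mu_{n+1,p}\to+\infty$ (using the definition of $L_{n,p}$); this gives $(\mathcal{P}_1^{n+1})$. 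The rescaling $v_{n+1,p}$ is then defined analogously and, again via Brezis--Merle type estimates and the Liouville classification, converges to $U$, providing $(\mathcal{P}_2^{n+1})$. To see that this procedure must stop, I would invoke Lemma \ref{lemma:BoundEnergia}$(ii)$ together with \eqref{RemarkMaxCirca1}, which yields $\alpha_i\geq 1$ and hence $p\int_\Omega|\nabla u_p|^2\,dx\geq 8\pi n+o_p(1)$; comparing with the assumed bound \eqref{assumptionEnergyGenerale} forces $n\leq \beta/(8\pi)$, so the iteration terminates at some $k\in\mathbb{N}\setminus\{0\}$ and the maximality statement in the proposition holds by construction.

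It remains to prove \eqref{pu_va_a_zero}. Using the Green's function representation $u_p(x)=\int_\Omega G(x,y)|u_p(y)|^{p-1}u_p(y)\,dy$ and the pointwise bound $(\mathcal{P}_3^k)$, one shows that $p\,u_p$ is uniformly bounded in $C^1_{loc}(\bar\Omega\setminus\mathcal{S})$: indeed, on any compact $K\Subset\bar\Omega\setminus\mathcal{S}$ the integrand is controlled by $C R_{k,p}(y)^{-2}|u_p(y)|$, which after splitting the integral into balls around the $x_{i,p}$ (where $|u_p|\leq 1$ and the measure is small) and the complement (where $G(x,\cdot)$ is smooth) yields the claim. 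Dividing by $\sqrt p$ and noting that $\sqrt{p}\,u_p=(p\,u_p)/\sqrt p$, we conclude $\sqrt{p}\,u_p\to 0$ in $C^1_{loc}(\bar\Omega\setminus\mathcal{S})$. The main technical obstacle is the inductive step, specifically checking that the choice of $x_{n+1,p}$ through $L_{n,p}$ is compatible with $(\mathcal{P}_1^{n+1})$ and that the rescaled sequence does not blow up in a degenerate way; everything else is essentially a careful bookkeeping combined with the finiteness-of-energy argument.
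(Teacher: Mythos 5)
Your proposal follows essentially the same route as the paper: the paper's proof is a citation to \cite[Proposition 2.2]{DeMarchisIanniPacella2}, and the induction scheme it describes there (initialization at a maximum point of $|u_p|$, the Druet-type exhaustion step producing $x_{n+1,p}$ when $(\mathcal{P}_3^n)$ fails, and termination via Lemma \ref{lemma:BoundEnergia}$(ii)$ combined with \eqref{assumptionEnergyGenerale}) is exactly what you outline, including the Green's function argument for \eqref{pu_va_a_zero}. No substantive discrepancy.
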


\begin{proof}
See \cite[Proposition 2.2]{DeMarchisIanniPacella2}.
\end{proof}

\

Proposition \ref{prop:x1N} was inspired by the paper \cite{Druet} where positive solutions of semilinear elliptic problems with critical exponential nonlinearities in $2$-dimension were studied. Its proof is based on an induction argument, namely one first proves that $(\mathcal{P}_1^1)$, $(\mathcal{P}_2^1)$ hold for points $x_{1,p}$ where $\upp$ achieves $\|\upp\|_{\infty}$ (actually $(\mathcal{P}_1^1)$ is trivially verified) and then one shows that if $(\mathcal{P}_1^n)$, $(\mathcal{P}_2^n)$ are satisfied for some $k\in\N\setminus\{0\}$ then either $(\mathcal{P}_3^n)$ holds true or there exists a point $x_{n+1,p}$, such that the $(n+1)$-tuple $x_{1,p},\ldots,x_{n+1,p}$ fulfills $(\mathcal{P}_1^{n+1})$, $(\mathcal{P}_2^{n+1})$. The procedure necessarily stops by virtue of Lemma \ref{lemma:BoundEnergia} and assumption \eqref{assumptionEnergyGenerale}.

\

Moreover one can easily derive the following corollary.
\begin{corollary}\label{cor:nonvedoNL}
Under the assumptions of Proposition \ref{prop:x1N} if the solutions $u_p$ are sign-changing it follows that
$$
\fr{dist(\xip,\partial\Omega)}{\mip}\stackrel{p\to+\infty}{\to}+\infty\qquad\textrm{and}\qquad\fr{dist(\xip,NL_p)}{\mip}\stackrel{p\to+\infty}{\to}+\infty\qquad\textrm{for all $i\in\{1,\ldots,k\}$}
$$
where, as in Section \ref{Introduction},  $NL_p$ denotes the  nodal line of $\upp$.\\
As a consequence, for any $i\in\{1,\ldots,k\}$, letting $\mathcal{N}_{i,p}\subset\Omega$ be the nodal domain of $u_p$ containing $x_{i,p}$ and setting $u_p^i:=u_p\chi_{\mathcal{N}_{i,p}}$ ($\chi_A$ is the characteristic function of the set $A$),
then the scaling of $u_p^i$ around $x_{i,p}$:
\begin{equation}\label{z_{i,p}}
z_{i,p}(x):=\fr{p}{\upp(\xip)}(\upp^i(\xip+\mip x)-\upp(\xip)),
\end{equation}
defined on $\widetilde{\mathcal{N}}_{i,p}:=\frac{\mathcal{N}_{i,p}-x_{i,p}}{\mu_{i,p}}$, converges to $U$ in $C^1_{loc}(\mathbb R^2)$, where $U$ is the function defined in \eqref{v0}.
\end{corollary}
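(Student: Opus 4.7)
My plan is to deduce both distance estimates directly from property $(\mathcal{P}_2^n)$ provided by Proposition \ref{prop:x1N}, and then to observe that once the rescaled ball around $x_{i,p}$ is known to lie in the single nodal domain $\mathcal{N}_{i,p}$, the truncation by $\chi_{\mathcal{N}_{i,p}}$ is invisible on any fixed compact set, so the sequences $z_{i,p}$ and $v_{i,p}$ eventually coincide.

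First I would handle the distance to the boundary. Property $(\mathcal{P}_2^n)$ asserts convergence of $v_{i,p}$ to $U$ in $C^1_{loc}(\R^2)$, which tacitly requires that, for every compact $K\subset\R^2$, the rescaled domain $\widetilde{\Omega}_{i,p}=\frac{\Omega-x_{i,p}}{\mu_{i,p}}$ eventually contains $K$. Hence for every $R>0$ one has $B_R(0)\subset \widetilde{\Omega}_{i,p}$ for $p$ large, i.e.\ $\mathrm{dist}(x_{i,p},\partial\Omega)/\mu_{i,p}\ge R$ for $p$ large; letting $R\to+\infty$ gives the first limit. (If one prefers a contradiction argument: a subsequential bound would force the $C^1_{loc}$ limit to live on a half-plane with Dirichlet datum carried in from $\partial\Omega$, which is incompatible with $U$ defined on all of $\R^2$.)

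Next I would tackle the distance to $NL_p$. The key algebraic identity, obtained by inverting the definition of $v_{i,p}$, is
\begin{equation*}
u_p(x_{i,p}+\mu_{i,p}x)=u_p(x_{i,p})\Bigl(1+\tfrac{1}{p}v_{i,p}(x)\Bigr).
\end{equation*}
Fix any $R>0$. Since $v_{i,p}\to U$ uniformly on $\overline{B_R(0)}$ and $U$ is bounded there, the sequence $\|v_{i,p}\|_{L^\infty(B_R(0))}$ is bounded by some constant $M=M(R)$. Therefore, for all $p>2M$, the factor $1+v_{i,p}(x)/p$ is strictly positive on $B_R(0)$, and $u_p(x_{i,p}+\mu_{i,p}x)$ carries the sign of $u_p(x_{i,p})$ throughout $B_R(0)$. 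This means $B_{R\mu_{i,p}}(x_{i,p})\cap NL_p=\emptyset$ for $p$ large, and since $R$ was arbitrary, $\mathrm{dist}(x_{i,p},NL_p)/\mu_{i,p}\to+\infty$.

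For the last assertion, fix any compact $K\subset\R^2$ and pick $R$ with $K\subset B_R(0)$. By the previous paragraph, for $p$ large the set $x_{i,p}+\mu_{i,p}K$ is contained in a connected subset of $\Omega\setminus NL_p$ that contains $x_{i,p}$, hence is contained in $\mathcal{N}_{i,p}$. Therefore $u_p^i=u_p$ on $x_{i,p}+\mu_{i,p}K$, which gives $z_{i,p}\equiv v_{i,p}$ on $K$ for all such $p$. The $C^1_{loc}(\R^2)$ convergence of $z_{i,p}$ to $U$ then follows at once from $(\mathcal{P}_2^n)$. I expect no serious obstacle: the only mildly delicate point is turning the $C^0$ sign information into $C^1$ convergence of the truncated rescaling, but since the two functions are \emph{identically equal} on any fixed compact set for $p$ large, this transition is free.
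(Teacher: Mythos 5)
Your argument is correct and is essentially the standard one; the paper does not reprove this corollary but cites \cite[Corollary 2.4]{DeMarchisIanniPacella2}, whose proof runs along the same lines (exhaustion of $\R^2$ by the rescaled domains, sign persistence of $u_p(x_{i,p}+\mu_{i,p}x)=u_p(x_{i,p})\left(1+\tfrac{1}{p}v_{i,p}(x)\right)$ on compacts, and connectedness of $B_{R\mu_{i,p}}(x_{i,p})$ forcing it into $\mathcal{N}_{i,p}$). The only step worth tightening is the first: rather than reading the exhaustion of $\R^2$ into the phrase \emph{convergence in $C^1_{loc}(\R^2)$}, it is cleaner to use your parenthetical remark in the quantitative form that $u_p\equiv 0$ on $\partial\Omega$ forces $v_{i,p}\equiv -p$ on the rescaled boundary $(\partial\Omega-x_{i,p})/\mu_{i,p}$, which is incompatible with locally uniform convergence to the locally bounded $U$ if that boundary met a fixed compact set along a subsequence.
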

\begin{proof}
See \cite[Corollary 2.4]{DeMarchisIanniPacella2}.
\end{proof}

\

We point out that, since we are assuming without loss of generality that $\|\upp\|_{\infty}=\|\upp^+\|_{\infty}$, we can take $x^+_p$ as the point $x_{1,p}$ so that directly from the proof of Proposition \ref{prop:x1N} we get the following result for the rescaling about $x^+_p$.

\begin{proposition} \label{rem:x^+=x_1}
Let $(\upp)$ be a family of solutions to \eqref{problem} satisfying \eqref{assumptionEnergyGenerale}. Then the rescaled functions
\begin{equation}
v_p^+(x):=\fr{p}{\upp(x_p^+)}(\upp(x_p^++\mu_p^+ x)-\upp(x_p^+))
\end{equation}
defined on $\widetilde{\Omega}_{p}^+$ (see Section \ref{Introduction} for the definition) converge to $U$ in $C^1_{loc}(\mathbb R^2)$, where $U$ is the  function introduced in \eqref{v0}.
\end{proposition}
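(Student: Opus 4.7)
My plan is to recognize that $v_p^+$ coincides with the first-scale blow-up $v_{1,p}$ produced by the inductive construction in Proposition~\ref{prop:x1N} when the starting point $x_{1,p}$ is chosen to be a point where $|u_p|$ attains its maximum. Since we are assuming $\|u_p\|_\infty = \|u_p^+\|_\infty$, this maximum is attained at $x_p^+$, so the choice $x_{1,p}:=x_p^+$ is legitimate for initializing the induction described right after the statement of Proposition~\ref{prop:x1N}.

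Concretely, I would verify the two ingredients needed to start that induction. First, \eqref{muVaAZero} at $x_p^+$ requires $p\,u_p(x_p^+)^{p-1}\to+\infty$; since $u_p(x_p^+)=\|u_p^+\|_\infty$, Lemma~\ref{lemma:BoundEnergia}$(i)$ gives $\|u_p^+\|_\infty^{p-1}\geq \lambda_1(\Omega)$, whence $p\,u_p(x_p^+)^{p-1}\geq p\,\lambda_1(\Omega)\to+\infty$. Second, $(\mathcal{P}_1^1)$ is a statement about pairs $i\neq j$ and is therefore vacuously satisfied when $n=1$. These two facts are exactly what is needed at the base step of the induction in the proof of Proposition~\ref{prop:x1N}, which then yields $(\mathcal{P}_2^1)$: the rescaling $v_{1,p}$ converges to $U$ in $C^1_{loc}(\R^2)$.

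It then remains only to identify $v_{1,p}$ with $v_p^+$. By the definitions \eqref{mip} of $\mu_{i,p}$ and of $\mu_p^\pm$ given in Section~\ref{Introduction}, the choice $x_{1,p}=x_p^+$ forces $\mu_{1,p}=\mu_p^+$; the rescaled domain $(\Omega-x_{1,p})/\mu_{1,p}$ coincides with $\widetilde{\Omega}_p^+$, and the defining formulas for $v_{1,p}$ and $v_p^+$ then match verbatim. The desired convergence $v_p^+\to U$ in $C^1_{loc}(\R^2)$ follows at once. There is no genuine obstacle to overcome in this proposition: all the real work is already contained in Proposition~\ref{prop:x1N}, and the content of the present statement is simply the observation that its inductive construction can be, and in view of our normalization must be, initialized at the positive-maximum point $x_p^+$.
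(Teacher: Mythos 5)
Your proposal is correct and follows essentially the same route as the paper: the authors also justify this proposition by observing that, under the normalization $\|u_p\|_\infty=\|u_p^+\|_\infty$, one may take $x_{1,p}=x_p^+$ as the starting point of the induction in the proof of Proposition~\ref{prop:x1N}, whose base step ($(\mathcal{P}_2^1)$ for the global maximum point) then gives the stated convergence. Your additional verification of \eqref{muVaAZero} via Lemma~\ref{lemma:BoundEnergia}$(i)$ and the identification $\mu_{1,p}=\mu_p^+$ are exactly the (implicit) details the paper leaves to the reader.
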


\
Now we prove a general proposition on the sign of the first eigenvalue of the linearized operators at $\upp^\pm$:
\[
L^{\pm}_p:=-\Delta -p|u_p^{\pm}|^{p-1},
\]
in the space $H^1_0(\mathcal N^{\pm}_p)$, respectively. Let us denote by $\lambda_j^{\pm}$, $j=1,2,\dots$ respectively, their eigenvalues with homogeneous Dirichlet boundary conditions and let $m(u^{\pm}_p)$ be the Morse index of $u_p^{\pm}$ in $\mathcal{N}_p^{\pm}$, namely  $\lambda_j^{\pm} <0$, for $j=1,\dots, m(u_p^{\pm})$ and $\lambda_{m(u_p^{\pm})+1}^{\pm}\geq 0$.
Moreover for a domain $B\subseteq \mathcal{N}_p^{\pm}$ we denote by $\lambda_j^{\pm}(B)$, $j=1,2,\dots$ the  Dirichlet eigenvalues of $L^{\pm}_p$ in $B$.

\begin{proposition}\label{LemmaFunzioneBella} Let $(\upp)$ be a family of solutions to \eqref{problem} satisfying \eqref{assumptionEnergyGenerale} and let $(x_{i,p})\subset\Omega$, $i=1,\dots ,k$ be families of points as in Proposition \ref{prop:x1N}. Then there exists $\bar r>0$ such  that
\begin{eqnarray*}
&&\lambda_1^{+}\left(B_{ \bar{r}\mu_{i,p}}(x_{i,p})\right)< 0\mbox{ for large }p\mbox{, if }(x_{i,p})\subset \mathcal{N}_p^{+}\\
&&
\lambda_1^{-}\left(B_{ \bar{r}\mu_{i,p}}(x_{i,p})\right)< 0\mbox{ for large }p\mbox{, if }(x_{i,p})\subset \mathcal{N}_p^{-}.
\end{eqnarray*}
where $B_{\bar r\mip}(\xip)$ are the balls centered in $\xip$ of radius $\bar r\mip$.
\end{proposition}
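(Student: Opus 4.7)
The plan is to rescale the eigenvalue problem for $L_p^+$ around $\xip$ so that, in the limit $p\to\infty$, one obtains the limit operator $-\lap-e^U$ on $\R^2$, and then to produce a test function supported in a large enough ball whose limiting Rayleigh quotient is strictly negative. Fix $i$ and assume first $(\xip)\subset\mathcal N_p^+$ (the case $(\xip)\subset\mathcal N_p^-$ is identical after replacing $u_p$ by $-u_p$). By Corollary \ref{cor:nonvedoNL}, for every fixed $\bar r>0$ and for all $p$ large the ball $B_{\bar r\mip}(\xip)$ lies inside the nodal domain $\mathcal N_{i,p}\subset\mathcal N_p^+$, hence $|u_p^+|^{p-1}=|u_p|^{p-1}$ there and every $\varphi\in H^1_0(B_{\bar r\mip}(\xip))$, extended by zero, is admissible in the variational characterization of $\lambda_1^+(B_{\bar r\mip}(\xip))$.

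Given $w_0\in C^\infty_c(B_{\bar r}(0))$, I set $\varphi_p(y):=w_0((y-\xip)/\mip)$. The change of variables $y=\xip+\mip x$, together with the defining identity $\mip^{-2}=p|u_p(\xip)|^{p-1}$, yields
\[
\int_{B_{\bar r\mip}(\xip)}\!\bigl(|\nabla\varphi_p|^2-p|u_p|^{p-1}\varphi_p^2\bigr)dy=\int_{B_{\bar r}(0)}\!\bigl(|\nabla w_0|^2-a_p(x)\,w_0(x)^2\bigr)dx,
\]
with $a_p(x):=(1+v_{i,p}(x)/p)^{p-1}$ (positive for $p$ large). By $(\mathcal P_2^k)$ the rescaled functions $v_{i,p}$ converge to $U$ uniformly on $\overline{B_{\bar r}(0)}$; writing $(1+t/p)^{p-1}=\exp((p-1)\log(1+t/p))$ and using $(p-1)\log(1+t/p)=t+O(t^2/p)$ for $t$ bounded one obtains $a_p\to e^U$ uniformly on $\overline{B_{\bar r}(0)}$, so
\[
\lim_{p\to\infty}\int_{B_{\bar r}(0)}\!\bigl(|\nabla w_0|^2-a_p\,w_0^2\bigr)dx=\int_{B_{\bar r}(0)}\!\bigl(|\nabla w_0|^2-e^U w_0^2\bigr)dx.
\]

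It then suffices to exhibit $\bar r>0$ and $w_0\in C^\infty_c(B_{\bar r}(0))$ making the right-hand side strictly negative, since the variational characterization will then force $\lambda_1^+(B_{\bar r\mip}(\xip))<0$ for $p$ large. Using $e^{U(x)}=(1+|x|^2/8)^{-2}$ and $\int_{\R^2}e^U=8\pi$, take $w_0$ radial, equal to $1$ on $B_R(0)$, zero outside $B_{2R}(0)$ and linear in between: then $\int|\nabla w_0|^2=3\pi$ independently of $R$, whereas $\int e^U w_0^2\geq\int_{B_R(0)}e^U\to 8\pi$ as $R\to\infty$, so picking $R$ large enough and setting $\bar r:=2R$ makes the limit strictly negative and concludes the proof.

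The main technical point I expect is the uniform convergence of the rescaled potential $a_p$ to $e^U$ on the fixed ball $\overline{B_{\bar r}(0)}$: it rests on the $C^1_{\rm loc}$-convergence $v_{i,p}\to U$ supplied by $(\mathcal P_2^k)$ and on the elementary estimate above for $(1+t/p)^{p-1}$ with $t$ bounded uniformly in $p$ and $x$. The remaining ingredients---the change of variables, the admissibility of $\varphi_p$ inside the nodal region (guaranteed by Corollary \ref{cor:nonvedoNL}), and the variational inequality for $\lambda_1^\pm$---are all routine once the asymptotic profile of $L_p^\pm$ on this scale has been identified.
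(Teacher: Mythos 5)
Your proof is correct, and it reaches the conclusion by a genuinely different route at the key step. Both you and the paper begin with the same rescaling, reducing the claim to showing that the first Dirichlet eigenvalue of $-\Delta - \bigl|1+\tfrac{v_{i,p}}{p}\bigr|^{p-1}$ on a fixed ball $B_{\bar r}(0)$ is eventually negative. From there the paper does not use an explicit cutoff at all: it takes the function $w_{i,p}=x\cdot\nabla z_{i,p}+\tfrac{2}{p-1}z_{i,p}+\tfrac{2p}{p-1}$, which lies in the kernel of the rescaled linearized operator (it is generated by the dilation invariance of the problem), observes that its limit $2-\tfrac{4r^2}{8+r^2}$ is positive at the origin and negative for $|x|=r>2\sqrt2$, truncates it at its zero set to get an $H^1_0(B_r(0))$ function with vanishing Rayleigh quotient, and then invokes strict monotonicity of $\lambda_1$ under domain inclusion to pass from $\leq 0$ on $B_r$ to $<0$ on $B_{\bar r}$, $\bar r>r$. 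You instead exhibit a piecewise-linear radial cutoff and win by the quantitative comparison $\int_{\R^2}e^U=8\pi>3\pi=\int|\nabla w_0|^2$, using the uniform convergence of the rescaled potential to $e^U$ on compacta (which you justify correctly, and which also underlies the paper's passage to the limit). Your argument is more elementary and self-contained --- it needs no identity for the kernel of the linearized Liouville operator and gets strict negativity directly, without the domain-monotonicity step --- at the price of requiring the explicit mass count $8\pi$ versus the capacity-type constant $3\pi$ of the annular cutoff; the paper's argument is softer and would survive even if the total mass of the limit bubble were not known so precisely. Two cosmetic points: your $w_0$ is Lipschitz rather than $C^\infty_c$, which is harmless for the variational characterization (or mollify), and the admissibility of $\varphi_p$ in $H^1_0(\mathcal N_p^{\pm})$ does indeed rest on Corollary \ref{cor:nonvedoNL} exactly as you say.
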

\begin{proof} Without loss of generality, by \eqref{muVaAZero}, we may assume that either $(x_{i,p})\subset\mathcal{N}_p^{+}$ or $(x_{i,p})\subset\mathcal{N}_p^{-}$, for $p$ large. We give the proof in the case $(x_{i,p})\subset \mathcal{N}_p^{+}$, the other case being similar.

Let us consider the linear operators
$$\widetilde{ L_{i,p}^+}:=-\Delta -\frac{|u_p^+(\mu_{i,p} x+ x_{i,p})|^{p-1}}{|u_p( x_{i,p})|^{p-1}}$$
in the space $H^1_0(\widetilde{\mathcal{N}}_{i,p}^+)$ where $\widetilde{\mathcal{N}}_{i,p}^+:=\{x\in\R^2\ :\ x_{i,p}+\mu_{i,p}x\in \mathcal{N}_p^+\}$.

Since for any function $v\in H^1_0(\widetilde{\mathcal{N}}_{p}^+)$ we have that the rescaled function $w(x)=v(\mip x+\xip)$ belongs to $H^1_0(\widetilde{\mathcal{N}}_{i,p}^+)$, we get that the  Dirichlet eigenvalues $\widetilde{\lambda}_j^{i,+}$, $j=1,2,\dots$ of $\widetilde{ L_{i,p}^+}$ satisfy
\[\widetilde{\lambda}_j^{i,+}=\lambda_j^+\frac{1}{p|u_p(x_{i,p})|^{p-1}},\quad j=1,2,\dots.\]
Moreover, for any subset $B\subseteq \mathcal{N}_p^+$, letting $\widetilde B_{i,p}:=\{x\in\mathbb R^2\ :\ \mu_{i,p}x+x_{i,p} \in B \}\subseteq \widetilde{\mathcal{N}}_{i,p}^+$, then the  Dirichlet eigenvalues of $\widetilde{ L_{i,p}^+}$ in
$\widetilde B_{i,p}$
are
\[
\widetilde{\lambda_j}^{i,+}(\widetilde{B}_{i,p}):=\lambda_j^+(B)\frac{1}{p|u_p(x_{i,p})|^{p-1}},\quad  j=1,2,\dots.
\]
As a consequence to prove the thesis is equivalent to show that there exists $\bar r>0$ such that
\begin{equation}\label{autovaloreRisc}
\widetilde{\lambda_1}^{i,+}\left(B_{ \bar r}(0)\right)< 0\ \mbox{ for large }p,
\end{equation}
where $B_{ \bar r}(0)$ is the ball centered in $0$ and radius $\bar r$. To prove \eqref{autovaloreRisc} we consider the functions
\[w_{i,p}:=x\cdot \nabla z_{i,p} +\frac{2}{p-1}z_{i,p}+\frac{2p}{p-1},\]
where $z_{i,p}$ is the function defined in \eqref{z_{i,p}}.
We have that $w_{i,p}$ satisfies $\widetilde{ L_{i,p}^+} (w_{i,p})=0$ and
$w_{i,p}(0)\rightarrow 2$. Moreover, as $z_{i,p}(x)\rightarrow U(x)=\log \left(\frac{1}{(1+\frac{1}{8}|x|^2)^2}\right)$,
we also get that
$w_{i,p}(x)\rightarrow -\frac{4r^2}{8+r^2}+2$,
for $|x|=r$, and so, for large $r$, $w_{i,p}(x)\rightarrow \alpha<0$ for $x\in\partial B_r(0)$.  For such $r$'s let us define $A_{i,p}:=\{x\in B_{{r}}(0): w_{i,p}>0\}$ and let us define
$\bar{w}_{i,p}=w_{i,p}$ in $A_{i,p}$ and $\bar{w}_{i,p}\equiv0$ in $B_r(0)\setminus A_{i,p}$.

Then $\bar w_{i,p}\in H^1_0(B_{r}(0))$ and for $\bar r >r$
\[\widetilde{\lambda_1}^{i,+}(B_{\bar r}(0))<\widetilde{\lambda_1}^{i,+}(B_{ r}(0))\leq \int_{B_{ r}(0)}|\nabla \bar{w}_{i,p}|^2- \int_{B_{ r}(0)}
\frac{|u_p^+(\mu_{i,p} x+ x_{i,p})|^{p-1}}{|u_p( x_{i,p})|^{p-1}}\bar{w}_{i,p}^2=0,\]
which proves the assertion.
\end{proof}

\section{Results for symmetric domains and proof of Theorem \ref{teoremaIndMorHpDeb}}\label{section:Proof}
All we have proved in the previous section holds regardless the symmetry of $\Omega$. In the sequel using the symmetry and the assumption on the Morse index \eqref{assumptionMorseIndex} we will derive more specific and precise results.

\

Thus let $\Omega\subset\R^2$ be a simply connected bounded smooth domain containing the origin and invariant under the action of a cyclic group $G$ of rotations about the origin with $|G|\geq2$. Let us consider a family $(\upp)$ of sign changing $G$-symmetric solutions as in the statement of Theorem \ref{teoremaIndMorHpDeb}. We apply Proposition \ref{prop:x1N} which gives a maximal number $k$ of families of points $(x_{i,p})$, $i=1,\ldots,k$, in $\Omega$ such that, up to a sequence, $(P^k_1)$, $(P^k_2)$ and $(P^k_3)$ hold for our solutions.
We start with the following result.

\begin{proposition}\label{MaxVaazeroMorseBasso} Let $\Omega$ be as in Theorem \ref{teoremaIndMorHpDeb} and let $(u_p)$ be a family of sign-changing $G$-symmetric solutions of \eqref{problem} satisfying \eqref{assumptionEnergyGenerale}.
Let $(x_{i,p})\subset\Omega$, $i=1,\dots ,k$ a family of points as in Proposition \ref{prop:x1N}. If $(x_{i,p})\subset \mathcal{N}_p^{+}$, for $p$ large, then assume that $m(u^{+}_p) <|G|$ otherwise, if $(x_{i,p})\subset \mathcal{N}_p^{-}$, for $p$ large, then assume that $m(u^{-}_p) <|G|$. Then
$$\frac{|x_{i,p}|}{\mu_{i,p}}\ \mbox{ is bounded}.$$
In particular $|x_{i,p}|\rightarrow 0.$
\end{proposition}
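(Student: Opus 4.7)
The plan is to argue by contradiction: if $|x_{i,p}|/\mu_{i,p}$ were unbounded along a subsequence, then the $G$-symmetry of $\upp$ would produce $|G|$ pairwise disjoint small balls inside the relevant nodal region, and on each of them the first Dirichlet eigenvalue of the linearized operator would be negative by Proposition \ref{LemmaFunzioneBella}. Concatenating the corresponding first eigenfunctions (extended by $0$) would then give a $|G|$-dimensional subspace of $H^1_0(\mathcal{N}_p^\pm)$ on which the quadratic form is strictly negative, contradicting the assumption $m(u_p^\pm)<|G|$.

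Concretely, suppose (up to a subsequence) that $(x_{i,p})\subset\mathcal{N}_p^{+}$ and $|x_{i,p}|/\mu_{i,p}\to+\infty$ (the negative case is identical). For every $g\in G$, I would consider the rotated family $(g\,x_{i,p})$. By $G$-symmetry of $\upp$ one has $\upp(g\,x_{i,p})=\upp(x_{i,p})$, so these rotated points lie in $\mathcal{N}_p^{+}$, share the same scaling parameter $\mu_{i,p}$, and inherit properties $(\mathcal{P}_1^k)$--$(\mathcal{P}_3^k)$ from the original family by composing the rescaling with the (isometric) rotation. Therefore Proposition \ref{LemmaFunzioneBella} applies to each $(g\,x_{i,p})$ and gives a single $\bar r>0$ such that, for $p$ large,
\[
\lambda_1^{+}\bigl(B_{\bar r\mu_{i,p}}(g\,x_{i,p})\bigr)<0\qquad\text{for every }g\in G.
\]

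Next I would show that the balls $B_{\bar r\mu_{i,p}}(g\,x_{i,p})$, $g\in G$, are pairwise disjoint for $p$ large. For distinct $g_1,g_2\in G$ the rotation $g_1 g_2^{-1}$ has angle at least $2\pi/|G|$, whence
\[
|g_1 x_{i,p}-g_2 x_{i,p}|\ \geq\ 2\sin\!\bigl(\pi/|G|\bigr)\,|x_{i,p}|,
\]
and since $|x_{i,p}|/\mu_{i,p}\to+\infty$ this quantity strictly exceeds $2\bar r\mu_{i,p}$ for $p$ large. Selecting the first Dirichlet eigenfunction $\phi_g$ on each $B_{\bar r\mu_{i,p}}(g\,x_{i,p})$ and extending it by $0$ to the rest of $\mathcal{N}_p^{+}$ yields $|G|$ functions in $H^1_0(\mathcal{N}_p^{+})$ with mutually disjoint supports, hence linearly independent; on each of them the quadratic form $\int|\nabla\varphi|^2-p(u_p^{+})^{p-1}\varphi^2$ is strictly negative. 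This forces $m(u_p^{+})\geq |G|$, contradicting the hypothesis. Boundedness of $|x_{i,p}|/\mu_{i,p}$ follows, and since $\mu_{i,p}\to 0$ one obtains $|x_{i,p}|\to 0$.

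The main point I expect to have to handle carefully is the transfer of Proposition \ref{LemmaFunzioneBella} to the rotated families: one needs that the radius $\bar r$ can be taken independent of $g\in G$, which follows because the rescaled functions around $g\,x_{i,p}$ are obtained from those around $x_{i,p}$ by precomposition with a rotation and therefore converge to the same limit $U$, so the test function $w_{i,p}$ in the proof of Proposition \ref{LemmaFunzioneBella} and the radius $r$ at which it becomes negative on the boundary are the same for every $g$. Everything else is essentially a disjointness computation and a standard Morse index counting argument.
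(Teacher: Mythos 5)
Your proposal is correct and follows essentially the same route as the paper's proof: contradiction from $|x_{i,p}|/\mu_{i,p}\to+\infty$, rotation of the points by the elements of $G$, Proposition \ref{LemmaFunzioneBella} transferred to the rotated points by the invariance of the Laplacian under rotations, disjointness of the balls of radius $\bar r\mu_{i,p}$, and a variational Morse index count giving $m(u_p^{+})\geq|G|$. The only detail the paper makes explicit that you leave implicit is that the small balls are entirely contained in $\mathcal{N}_p^{+}$ (not just centered there), which it gets from the intermediate radius $R_n$ in \eqref{R_n} together with Corollary \ref{cor:nonvedoNL}; this is needed so that the extended eigenfunctions genuinely lie in $H^1_0(\mathcal{N}_p^{+})$.
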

\begin{proof} We prove the assertion in the case $(x_{i,p})\subset \mathcal{N}_p^{+}$, the other case being similar. Moreover in order to simplify the notation we drop the dependence on $i$ namely we set
$x_{p}:=x_{i,p}$ and $\mu_{p}:=\mu_{i,p}$. Let $h:=|G|$ and assume by contradiction that there exists a sequence $p_n\rightarrow +\infty$ such that $\frac{|x_{p_n}|}{\mu_{p_n}}\rightarrow + \infty$.
Then, since the $h$ distinct points $g^j x_{p_n}$ (where the $(g^j)$'s are the element of $G$), $j=0,\ldots, h-1$, are the vertex of a regular polygon centered in $O$, we have that $d_n:=|g^j x_{p_n}-g^{j+1}x_{p_n}|=2\widetilde d_n \sin{\frac{\pi}{h}}$, where $\widetilde d_n:=|g^jx_{p_n}|$, $j=0,..,h-1$. Hence we also have that $\frac{d_n}{\mu_{p_n}}\rightarrow +\infty$.

Let \begin{equation}\label{R_n}R_{n}:=\min\left\{\frac{d_n}{3},\frac{d(x_{p_n},\partial\Omega)}{2},\frac{d(x_{p_n},NL_{p_n})}{2}\right\},
\end{equation}
then  by construction
\begin{equation}\label{palleDisgiunte}
\begin{array}{rl}
B_{R_n}(g^j x_{p_n})\subseteq \mathcal{N}_{p_n}^+ & \textrm{ for }j=0,\dots,h-1, \vspace{0.15cm}\\
B_{R_n}(g^j x_{p_n})\cap B_{R_n}(g^l x_{p_n}) =\emptyset, & \mbox{ for }j\neq l
\end{array}
\end{equation}
and by virtue of Corollary \ref{cor:nonvedoNL}
\begin{equation}\label{invadeR2}
%\mbox{" }\  B_{\frac{R_n}{\mu_{p_n}}}(0)\rightarrow \R^2 \ \mbox{ %''}.
\frac{R_n}{\mu_{p_n}}\rightarrow  +\infty.
\end{equation}

By Proposition \ref{LemmaFunzioneBella} it follows that $\lambda_1^+\left(B_{ \bar{r}\mu_{p_n}}(x_{p_n})\right)< 0$ for large $n$.
So by the $G$-symmetry of $u_{p_n}^+$ and the invariance of the laplacian by orthogonal transformations, it is easy to see that $\lambda_1^+(B_{\bar r\mu_{p_n}}(g^j x_{p_n}))<0$, for each $j=0,\dots, h-1$.  Hence by the variational characterization of the $1$-st eigenvalue, there exists $\varphi_j\in H^1_0(B_{\bar r\mu_{p_n}}(g^j x_{p_n}))$,  such that
$$R(v):=\frac{ \int_{\mathcal{N}_{p_n}^+}\left[|\nabla v|^2 -p|u_p^+|^{p-1}v^2 \right] }{\|v\|^2_2}  \geq R(\varphi_j)=\lambda_1^+(B_{\bar r\mu_{p_n}}(g^j x_{p_n}))< 0,$$
for any $v\in H^1_0(B_{\bar r\mu_{p_n}}(g^j x_{p_n}))$,  $v\neq 0 $,
$j=0,\dots, h-1$.

Let $W:=span\{\varphi_0, \dots, \varphi_{h-1}\}$, then by \eqref{invadeR2} it follows that for $p$ large $B_{\bar r\mu_{p_n}}(g^j x_{p_n})\subseteq B_{R_n}(g^j x_{p_n})$, hence $W\subset H^1_0 (\mathcal{N}_{p_n}^+)$ and also, by \eqref{palleDisgiunte},  $\dim W= h$  and  $R(v)\leq \sum_{j=0}^{h-1}R(\varphi_j)< 0$ for any $v\in W$.

Hence, using the variational characterization of the $h$-th eigenvalue, it follows that $\lambda_{h}^+<0$, namely $m(u^+_{p_n}) \geq h$, a contradiction.
\end{proof}

\

Now we state several results which can be obtained exactly in the same way as for analogous results in \cite{DeMarchisIanniPacella2}. They will be important steps for the proof of Theorem \ref{teoremaIndMorHpDeb}.

\begin{proposition} \label{regioneNodaleInterna} Under the same assumptions as in Theorem \ref{teoremaIndMorHpDeb} we have:
\begin{itemize}
\item[$(i)$]  $NL_p\cap\partial\Omega=\emptyset$ and $O\not\in NL_p.$
\item[$(ii)$] $O\in \mathcal{N}_p^+$ for $p$ large.
\item[$(iii)$] $ x_{i,p}\in\mathcal{N}_p^+\mbox{ for } p\mbox{ large and  }i=1,\dots,k.$
\item[$(iv)$] The maximal number  $k$ of families of points $(x_{i,p})$, $i=1,\ldots, k$, for which $(P^k_1)$, $(P^k_2)$ and  $(P^k_3)$ hold is $1$.
\item[$(v)$] There exists $C>0$ such that
for any family $(x_p)\subset \Omega$, one has
\begin{equation}\label{boundDaQ1}
\frac{|x_p|}{\mu(x_p)}\leq C
\end{equation}
for $p$ large, where $\mu(x_p)$ is defined by $(\mu(x_p))^{-2}=p|\upp(x_p)|^{p-1}$.
\end{itemize}
\end{proposition}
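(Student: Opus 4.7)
The plan is to establish (i)--(v) in order, as each part feeds into the next, with (iv) being the only nontrivial one. Part (i) is immediate: since $\sharp(u_p) = 2 \leq |G|$, Proposition \ref{prop:nonTocca} applies directly. For (ii), I would combine Proposition \ref{MaxVaazeroMorseBasso} applied to $x_{1,p} = x_p^+ \in \mathcal{N}_p^+$ (valid thanks to $m(u_p^+) < |G|$), which yields $|x_p^+|/\mu_p^+ \leq C^+$, with the convergence $v_p^+ \to U$ in $C^1_{loc}$ from Proposition \ref{rem:x^+=x_1}: on any fixed ball $|y| \leq R$, the latter gives $u_p(x_p^+ + \mu_p^+ y) = u_p(x_p^+)(1 + v_p^+(y)/p) > 0$ for $p$ large, so $B_{R\mu_p^+}(x_p^+) \subset \mathcal{N}_p^+$; choosing $R > C^+$ places $O$ inside. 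For (iii), I would argue by contradiction: if $x_{i,p} \in \mathcal{N}_p^-$ along a subsequence, then Proposition \ref{MaxVaazeroMorseBasso} (now using $m(u_p^-) < |G|$) bounds $|x_{i,p}|/\mu_{i,p}$, while Corollary \ref{cor:nonvedoNL} gives $z_{i,p} \to U$ in $C^1_{loc}$ on the invading rescaled nodal domain. The same bootstrap as in (ii) forces $O \in \mathcal{N}_p^-$, contradicting (ii).

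Part (iv) is the crucial step. Suppose by contradiction that $k \geq 2$. Since $|u_p(x_{1,p})| = \|u_p\|_\infty \geq |u_p(x_{2,p})|$, the definition of the $\mu$'s gives $\mu_{1,p} \leq \mu_{2,p}$. Using the triangle inequality, the separation property $(\mathcal{P}_1^2)$ (with $i = 2$, $j = 1$), and the bound on $|x_{1,p}|/\mu_{1,p}$ from Proposition \ref{MaxVaazeroMorseBasso}, one estimates
\[
\frac{|x_{2,p}|}{\mu_{2,p}} \;\geq\; \frac{|x_{2,p} - x_{1,p}|}{\mu_{2,p}} - \frac{|x_{1,p}|}{\mu_{2,p}} \;\geq\; \frac{|x_{2,p} - x_{1,p}|}{\mu_{2,p}} - \frac{|x_{1,p}|}{\mu_{1,p}} \;\longrightarrow\; +\infty,
\]
since the first summand diverges by $(\mathcal{P}_1^2)$ while the second is bounded. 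On the other hand, by (iii) we know $x_{2,p} \in \mathcal{N}_p^+$, so Proposition \ref{MaxVaazeroMorseBasso} (invoking $m(u_p^+) < |G|$ again) forces $|x_{2,p}|/\mu_{2,p}$ to stay bounded, a contradiction. Hence $k = 1$.

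Finally, (v) is a quick consequence of $k = 1$. Property $(\mathcal{P}_3^1)$ then reads $|x - x_{1,p}|/\mu(x) \leq \sqrt{C}$ for any $x$ with $u_p(x) \neq 0$ (the other case being trivial). Since $\mu_{1,p}/\mu(x) = (|u_p(x)|/\|u_p\|_\infty)^{(p-1)/2} \leq 1$, and $|x_{1,p}|/\mu_{1,p}$ is bounded by Proposition \ref{MaxVaazeroMorseBasso}, the triangle inequality delivers $|x|/\mu(x) \leq \sqrt{C} + C^+$. The main obstacle is clearly (iv): this is where the Morse index hypothesis must be combined nontrivially with the separation condition $(\mathcal{P}_1^k)$ to rule out a second bubble; parts (i)--(iii) and (v) are essentially direct consequences of results already established in the excerpt, glued together by the bootstrap afforded by the rescaled profile $U$.
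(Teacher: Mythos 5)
Your proposal is correct. The paper's own proof of this proposition is essentially a citation: part $(i)$ is deduced from Proposition \ref{prop:nonTocca} exactly as you do, and $(ii)$--$(v)$ are deferred to \cite[Corollary 3.5, Proposition 3.6, Corollary 3.7]{DeMarchisIanniPacella2}; your reconstruction fills in those details along the expected lines, with Proposition \ref{MaxVaazeroMorseBasso} supplying the bound on $|x_{i,p}|/\mu_{i,p}$ that plays the role of the energy-based estimate of the companion paper, and the steps you give (the ball $B_{R\mu_{i,p}}(x_{i,p})\subset\mathcal{N}_{i,p}$ from the $C^1_{loc}$ convergence of the rescalings, the $\mu_{1,p}\leq\mu_{2,p}$ trick against $(\mathcal{P}_1^2)$ for $(iv)$, and $(\mathcal{P}_3^1)$ plus the triangle inequality for $(v)$) all check out.
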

\begin{proof}
As already observed in the Introduction, $(i)$ is a consequence of Proposition \ref{prop:nonTocca} which applies to any $G$-symmetric solution having two nodal domains.
Once property $(i)$ is proved the $(ii)-(v)$ follow as in  \cite[Corollary 3.5, Proposition 3.6 and Corollary 3.7]{DeMarchisIanniPacella2}.
\end{proof}

\

By Lemma \ref{lemma:BoundEnergia} and Proposition \ref{regioneNodaleInterna} for the minimum points $x^-_p$ we then have
\bel\label{minimVaAZero}
 \frac{|x_p^-|}{\mu_p^-}\leq C,
\eel
so there are two possibilities: either $\fr{|x_p^-|}{\mu_p^-}\to\ell>0$ or $\fr{|x_p^-|}{\mu_p^-}\to0$ as $p\rightarrow +\infty$, up to subsequences. A crucial point of the proof is to exclude the latter case.

\begin{proposition}\label{prop:NLp+l>0}
There exists $\ell>0$ such that, up to a subsequence,
\[
\fr{|x^-_p|}{\mu_p^-}\to\ell \ \ \ \mbox{ as }\ p\rightarrow +\infty.
\]
Let us define
\begin{equation}\label{x_infty}
x_\infty:=-\lim_{p\rightarrow +\infty}\fr{x^-_p}{\mu^-_p},\ |x_\infty|=\ell>0.
\end{equation}
\end{proposition}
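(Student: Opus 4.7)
The plan is to argue by contradiction: assume, along a subsequence, that $|x_p^-|/\mu_p^- \to 0$. Since $O\in\mathcal N_p^+$ while $x_p^-\in\mathcal N_p^-$ (Proposition \ref{regioneNodaleInterna}), the nodal line $NL_p$ must cross the segment from $x_p^-$ to $O$, hence $\mathrm{dist}(x_p^-,NL_p)\le |x_p^-|$; combined with the contradiction hypothesis this forces the rescaled nodal line (where the function $v_p^-$ from part (iv) of Theorem \ref{teoremaIndMor} equals $-p$) to pass at rescaled distance $o(1)$ from the rescaled origin, while $v_p^-(0)=0$. It is this degenerate geometric configuration that must be ruled out.

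The strategy is to derive a contradiction with the Morse-index bound $m(u_p^-)<|G|$ in assumption \eqref{assumptionMorseIndex}, in the spirit of Proposition \ref{MaxVaazeroMorseBasso} but in the opposite regime in which the $|G|$ group images of $x_p^-$ collapse rather than spread apart. By $G$-symmetry, the $h:=|G|$ points $g^j x_p^-$, $j=0,\dots,h-1$, are all minima of $u_p$ with the same value, and around each one the rescaling of $u_p^-$ develops a Liouville-type bubble at scale $\mu_p^-$. For each $j$ I will produce a test function $\bar w_p^{(j)}\in H^1_0(\mathcal N_p^-)$ with negative Rayleigh quotient for $L_p^-$, mimicking the \emph{Bol-type} function $w_{i,p}=x\cdot\nabla z_{i,p}+\tfrac{2}{p-1}z_{i,p}+\tfrac{2p}{p-1}$ of Proposition \ref{LemmaFunzioneBella} but centered at $g^j x_p^-$. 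The $h$ such translates span a subspace of $H^1_0(\mathcal N_p^-)$, which I decompose along the irreducible characters of the cyclic group $G$: the combinations $\phi_l=\sum_j\omega^{lj}\bar w_p^{(j)}$, with $\omega=e^{2\pi i/h}$, are linearly independent (they transform differently under $G$), and I verify that each still has negative Rayleigh quotient. This forces $m(u_p^-)\ge h=|G|$, contradicting \eqref{assumptionMorseIndex}.

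The main obstacle lies in the overlap: because $|x_p^-|/\mu_p^- \to 0$, the balls $B_{\bar r\mu_p^-}(g^j x_p^-)$ around different group images all intersect, so one cannot simply invoke the \emph{disjoint supports} step used in the proof of Proposition \ref{MaxVaazeroMorseBasso}. Controlling the cross-terms in the Rayleigh quotient of each $\phi_l$ therefore requires a fine asymptotic analysis of $v_p^-$ on a fixed rescaled ball around the origin, identifying the joint limit of the $h$ merging bubbles as a Liouville solution invariant under the cyclic group $G$; this is where I expect to follow most closely the corresponding step in \cite{DeMarchisIanniPacella2}, adapted to work with the Morse-index hypothesis \eqref{assumptionMorseIndex} in place of the energy-based one used there.
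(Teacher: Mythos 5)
The paper does not prove this proposition at all: it imports it verbatim from \cite[Proposition 4.2]{DeMarchisIanniPacella2}, a paper that has no Morse index hypothesis whatsoever, so whatever argument is used there cannot rely on \eqref{assumptionMorseIndex}. Your plan is therefore a genuinely different route, and unfortunately its central step fails. In the contradiction regime $|x_p^-|/\mu_p^-\to0$ the $h=|G|$ points $g^jx_p^-$ satisfy $|g^jx_p^--x_p^-|\le 2|x_p^-|=o(\mu_p^-)$, so at the only relevant scale $\mu_p^-$ they coalesce into a single point. Your test functions $\bar w_p^{(j)}$ are then $h$ copies of one and the same asymptotically radial profile, translated by $o(\mu_p^-)$; the character combinations $\phi_l=\sum_j\omega^{lj}\bar w_p^{(j)}$ with $l\neq0$ are small differences of nearly identical functions. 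The obstruction is not their linear independence but the sign of their Rayleigh quotients: to leading order each $\phi_l$, $l\neq0$, is a combination of angular/translation derivatives of a single Liouville bubble, and such directions belong to the kernel or nonnegative spectrum of the limiting linearized operator $-\Delta-e^U$, whose negative eigenspace is one-dimensional. A single merged bubble can produce only one negative direction, so your construction gives $m(u_p^-)\ge1$, which is no contradiction with \eqref{assumptionMorseIndex}. This is precisely why Proposition \ref{MaxVaazeroMorseBasso} is proved only in the opposite regime $|x_{i,p}|/\mu_{i,p}\to+\infty$, where the balls $B_{\bar r\mu_{i,p}}(g^jx_{i,p})$ are disjoint and the $h$ negative directions are genuinely independent in energy.

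There is a secondary gap as well: the building block $w_{i,p}$ of Proposition \ref{LemmaFunzioneBella} requires the rescaling $z_{i,p}$ about the centre to converge to the regular bubble $U$, and this is guaranteed (Corollary \ref{cor:nonvedoNL}) only for the concentration points $x_{i,p}$ of Proposition \ref{prop:x1N}, which all lie in $\mathcal N_p^+$ by Proposition \ref{regioneNodaleInterna}(iii). For the minimum point $x_p^-$ the rescaled limit is the singular solution of Proposition \ref{prop:scalingNegativo}, not $U$, so even the single negative direction in $\mathcal N_p^-$ is not available from the results you invoke. The mechanism that actually forces $\ell>0$ is the one your first paragraph points at but does not use: since $O\in\mathcal N_p^+$, one has $v_p^-(-x_p^-/\mu_p^-)\le -p\to-\infty$ while $v_p^-(0)=0$, so if $x_p^-/\mu_p^-\to0$ the blow-down point collides with the maximum point of $v_p^-$; the contradiction is then completed by elliptic estimates and the classification of limits (as in \cite{DeMarchisIanniPacella2}), not by a Morse index count. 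I would replace the whole character-decomposition scheme by that argument, or simply by the citation the paper uses.
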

\begin{proof}
See \cite[Proposition 4.2]{DeMarchisIanniPacella2}.
\end{proof}
Next, even if we have no information on the geometry of the nodal line we are able to show that the nodal line shrinks to the origin faster than $\mu_p^-$ as $p\rightarrow +\infty$.

\begin{proposition}
\label{prop:NodalLineShrinks}
We have
\[
\frac{\max\limits_{y_p\in NL_p}|y_p|}{\mu_p^-}\rightarrow 0 \ \ \mbox{ as }\ p\rightarrow +\infty.
\]
\end{proposition}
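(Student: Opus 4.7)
The plan is to argue by contradiction: assume that along a subsequence there exist $y_p \in NL_p$ with $|y_p|/\mu_p^- \geq c > 0$. I rescale about the origin at the scale $\mu_p^-$, setting
\[
\hat u_p(z) := \frac{u_p(\mu_p^- z)}{|u_p(x_p^-)|} \quad \text{on } \hat\Omega_p := \Omega/\mu_p^-,
\]
and aim to show that $\hat u_p \to -1$ in $C^1_{loc}(\R^2\setminus\{0\})$; a topological argument exploiting the connectedness of $NL_p$ then delivers the contradiction.

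By construction $\hat u_p \geq -1$, $\hat u_p(x_p^-/\mu_p^-) = -1$, with $x_p^-/\mu_p^- \to -x_\infty \neq 0$ by Proposition \ref{prop:NLp+l>0}, and $-\Delta \hat u_p = \tfrac{1}{p}|\hat u_p|^{p-1}\hat u_p$. Since Proposition \ref{regioneNodaleInterna}(iv) gives the unique concentration point $x_p^+$, the property $(\mathcal{P}_3^1)$ rewrites in rescaled variables as $|\hat u_p(z)|^{p-1} \leq C/|z - x_p^+/\mu_p^-|^2$; combined with $|x_p^+| \leq C\mu_p^+ \leq C\mu_p^-$ and $x_p^+\to 0$ (Proposition \ref{MaxVaazeroMorseBasso}), this yields a uniform $L^\infty$-bound for $\hat u_p$ on every compact subset of $\R^2\setminus\{0\}$ (the exponent $1/(p-1)$ sends $(C/|z|^2)^{1/(p-1)}$ to $1$). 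The PDE right-hand side therefore tends to $0$ uniformly on such sets, and standard elliptic estimates provide, along a subsequence, $\hat u_p \to \hat u_\infty$ in $C^1_{loc}(\R^2\setminus\{0\})$ with $\hat u_\infty$ harmonic, $\hat u_\infty \geq -1$, and $\hat u_\infty(-x_\infty) = -1$. The strong maximum principle applied to the nonnegative harmonic function $\hat u_\infty + 1$ on the connected open set $\R^2\setminus\{0\}$ forces $\hat u_\infty \equiv -1$.

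Fix now $\delta \in (0, \min\{c, \ell\})$ and $R > \ell := |x_\infty|$. By the previous step, for $p$ large $\hat u_p < -1/2$ throughout $\overline{B_R(0)} \setminus B_\delta(0)$, so the rescaled nodal line $\hat{NL}_p := NL_p/\mu_p^-$ lies in the disjoint union $B_\delta(0) \sqcup (\hat\Omega_p \setminus \overline{B_R(0)})$. Because $u_p$ has exactly two nodal regions in the simply connected $\Omega$ and $NL_p \cap \partial\Omega = \emptyset$ by Proposition \ref{regioneNodaleInterna}(i), the set $NL_p$ is connected, and in $\hat\Omega_p$ it separates $0 \in \hat{\mathcal N}_p^+$ from $-x_\infty \in \hat{\mathcal N}_p^-$; hence $\hat{NL}_p$ lies entirely in one of the two open pieces. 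The second possibility would leave the connected set $B_R(0) \cap \hat\Omega_p$, containing both $0$ and $-x_\infty$, inside a single component of $\hat\Omega_p \setminus \hat{NL}_p$, contradicting the fact that $0$ and $-x_\infty$ belong to different nodal regions. Therefore $\hat{NL}_p \subset B_\delta(0)$, giving $|y_p|/\mu_p^- \leq \delta < c$, a contradiction. The main technical hurdle is the identification $\hat u_\infty \equiv -1$, for which the uniform bound coming from $(\mathcal{P}_3^1)$ and the strong maximum principle are both essential; the topological step is clean once this is in place, and it is there that the two-nodal-regions hypothesis enters decisively, through the connectedness of $NL_p$.
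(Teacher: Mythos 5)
The paper itself gives no proof here (it defers to Proposition 4.3 of the cited reference), so I am judging your argument on its own terms. Your overall strategy --- rescale about the origin at scale $\mu_p^-$, show the rescaled function $\hat u_p$ tends to the constant $-1$ in $C^1_{loc}(\R^2\setminus\{0\})$, then trap the rescaled nodal line in a small ball by a connectedness argument --- is the right kind of argument. But there is a genuine gap at the key analytic step, namely the uniform control of $|\hat u_p|^{p-1}$ (hence of $-\Delta\hat u_p$) on compact subsets of $\R^2\setminus\{0\}$. From $(\mathcal{P}_3^1)$ you correctly get $|\hat u_p(z)|^{p-1}\le C\,|z-x_p^+/\mu_p^-|^{-2}$, but for this to be bounded on a compact $K\subset\R^2\setminus\{0\}$ you need the \emph{rescaled} concentration point $x_p^+/\mu_p^-$ to stay away from $K$, i.e.\ you need $|x_p^+|/\mu_p^-\to 0$. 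What you invoke --- $|x_p^+|\le C\mu_p^+\le C\mu_p^-$ together with $x_p^+\to 0$ --- only yields $|x_p^+|/\mu_p^-\le C$: the convergence $x_p^+\to 0$ concerns the unrescaled variable and says nothing at scale $\mu_p^-$, and $\mu_p^+\le\mu_p^-$ does not give $\mu_p^+/\mu_p^-\to0$ at this stage of the argument. If $x_p^+/\mu_p^-$ accumulated at some $z_0\ne 0$, the positive bubble would sit at an interior point of the region where you need $-\Delta\hat u_p\to 0$, $(\mathcal{P}_3^1)$ gives no bound there, and the identification $\hat u_\infty\equiv-1$ (hence the whole contradiction) would not follow from what you wrote.

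The gap is closable with tools already available, and you should do it explicitly. Since $O\in\mathcal{N}_p^+$ (Proposition \ref{regioneNodaleInterna}) and $x_p^-\in\mathcal{N}_p^-$, the segment $[O,x_p^-]$ meets $NL_p$ at some $q_p$ with $|q_p|\le|x_p^-|\le C\mu_p^-$ by \eqref{minimVaAZero}; hence $\mathrm{dist}(x_p^+,NL_p)\le|x_p^+|+|q_p|\le C(\mu_p^++\mu_p^-)$, and Corollary \ref{cor:nonvedoNL}, which gives $\mathrm{dist}(x_p^+,NL_p)/\mu_p^+\to+\infty$, then forces $\mu_p^+/\mu_p^-\to0$, whence $|x_p^+|/\mu_p^-\le C\mu_p^+/\mu_p^-\to0$. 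With that, your Step 1 goes through. A secondary, smaller issue: you assert that $NL_p$ is connected, which is plausible for a solution with two nodal domains but is not proved; it is safer, and sufficient, to use the connectedness of the nodal domains instead. If $y_p\in NL_p$ with $|y_p|/\mu_p^-\ge c$, then either $y_p/\mu_p^-$ lies in the annulus $\overline{B_R(0)}\setminus B_\delta(0)$ where $\hat u_p<-1/2$ (impossible, as $\hat u_p(y_p/\mu_p^-)=0$), or $|y_p|/\mu_p^->R$; in the latter case points of $\mathcal{N}_p^+$ accumulate at $y_p$ (every zero of $u_p$ lies on $\partial\mathcal{N}_p^+$ by the strong maximum principle), so the connected set $\mathcal{N}_p^+$ contains points outside $B_{R\mu_p^-}(0)$ as well as $O\in B_{\delta\mu_p^-}(0)$ and must therefore cross the annulus $\{\delta\mu_p^-<|x|<R\mu_p^-\}$, which lies in $\mathcal{N}_p^-$ --- a contradiction.
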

\begin{proof}
See \cite[Proposition 4.3]{DeMarchisIanniPacella2}
\end{proof}
These two last propositions allow to characterize the behavior of the rescaled solutions about $x^-_p$.

\begin{proposition}
\label{prop:scalingNegativo}
The scaling of $u_p$ around $x_p^-$
\begin{equation}
\label{scalingNegativo}
v_p^-(x):=\frac{p}{u_{p}(x_{p}^-)}\left( u_{p}(\mu_{p}^- x+x_{p}^-)-u_{p}(x_{p}^-) \right)
\end{equation}
defined on $\widetilde\Omega_p^-$ converges (passing to a subsequence)  in $C^1_{loc}(\mathbb R^2\setminus \{x_{\infty}\})$ to the function
$V(x-x_{\infty})$, where $V$ is a singular solution of
\begin{equation}
\label{LiouvilleSingularEquationInZero}
\left\{
\begin{array}{lr}
-\Delta V=e^V+ H\delta_{0}\quad\mbox{ in }\R^2\\
\int_{\R^2}e^Vdx<\infty.
\end{array}
\right.
\end{equation}
for some negative $H$, and $x_{\infty}$ is the point defined in \eqref{x_infty}. More precisely, letting $\ell$ be as in \eqref{x_infty}, then:
\begin{itemize}
\item either $V$ is the radial singular solution of \eqref{LiouvilleSingularEquationInZero}, for some negative $H=H(\ell)$,
\[V=V_{rad,\ell}(x):=\log\left(\frac{2\alpha^2\beta^{\alpha}|x|^{\alpha -2}}{(\beta^{\alpha}+|x|^{\alpha})^2} \right)\qquad x\in\mathbb{R}^2\setminus\{0\},
\]
where $\alpha=\sqrt{2\ell^2+4}$ and $\beta=\ell \left(\frac{\alpha+2}{\alpha-2} \right)^{1/\alpha}$,
\item or $V$ is the $(\eta+1)$-symmetric solution of \eqref{LiouvilleSingularEquationInZero}, for $H=-4\pi\eta$, which in complex notations can be expressed as follows
$$
V=V_{\eta,\ell}(z):=\log\left(\frac{8(\eta+1)^2\lambda|z|^{2\eta}}{(1+\lambda|z^{\eta+1}-c|^2)^2}\right)\qquad z\in\mathbb{C}\setminus\{0\},
$$
where $(\eta+1)$ is an integer multiple of $|G|$, $\lambda=\frac{(\ell^2+2\eta^2)^2}{8(\eta+1)^2\ell^{2\eta+4}}$, $c=(-x_\infty)^{\eta+1}(1-\frac{4\eta(\eta+1)}{\ell^2+2\eta^2})$.
\end{itemize}
\end{proposition}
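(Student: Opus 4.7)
\emph{Proof plan.} The plan is to mirror the scheme of the analogous rescaling result in \cite{DeMarchisIanniPacella2}, which now goes through unchanged because the preliminary Propositions \ref{prop:NLp+l>0}--\ref{prop:NodalLineShrinks} supply precisely the geometric input it required: the nodal line stays off $\partial\Omega$ and off $O$, only $k=1$ bubble is seen by the concentration argument, $|x_p^-|/\mu_p^-\to\ell>0$, and $NL_p$ shrinks to the origin strictly faster than $\mu_p^-$.

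First I would derive the PDE satisfied by $v_p^-$. Using $(\mu_p^-)^{-2}=p|u_p(x_p^-)|^{p-1}$ and $u_p(x_p^-)<0$, a direct rescaling yields
\begin{equation*}
-\Delta v_p^- \,=\, \left|1+\frac{v_p^-}{p}\right|^{p-1}\!\!\left(1+\frac{v_p^-}{p}\right)\qquad\text{in }\widetilde{\Omega}_p^-.
\end{equation*}
By Proposition \ref{prop:NodalLineShrinks} the rescaled nodal line $(NL_p-x_p^-)/\mu_p^-$ collapses onto $\{x_\infty\}$, so for $p$ large every compact $K\subset\mathbb{R}^2\setminus\{x_\infty\}$ sits in the rescaled negative nodal domain, where $1+v_p^-/p\in[0,1]$, the right-hand side lies in $[0,1]$, and $v_p^-\leq 0$ with $v_p^-(0)=0$.

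Second, I would establish uniform $C^{1,\alpha}_{loc}(\mathbb{R}^2\setminus\{x_\infty\})$ bounds: the one-sided bound $-v_p^-\geq 0$, the bounded Laplacian, and the anchor $-v_p^-(0)=0$ yield $L^\infty_{loc}$ control via a weak-Harnack inequality for non-negative superharmonic functions, and standard elliptic regularity upgrades this to $C^{1,\alpha}$. Along a subsequence $v_p^-\to \widetilde{V}$ in $C^1_{loc}(\mathbb{R}^2\setminus\{x_\infty\})$, and setting $V(y):=\widetilde{V}(y+x_\infty)$ shifts the singularity to the origin. On the negative side $(1+v_p^-/p)^p\to e^{\widetilde{V}}$ pointwise, so $-\Delta \widetilde{V}=e^{\widetilde{V}}$ classically in $\mathbb{R}^2\setminus\{x_\infty\}$. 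To capture the singularity at $x_\infty$ I would test the equation against $\varphi\in C^\infty_c(\mathbb{R}^2)$: since $\mu_p^+/\mu_p^-\to 0$ the positive bubble is squeezed, in the $\mu_p^-$-scale, into an arbitrarily small neighborhood of $x_\infty$, and the quantitative mass estimate $\int(u_p^+)^p\,dy\simeq 8\pi|u_p(x_p^+)|/p$ from Proposition \ref{rem:x^+=x_1} then produces a distributional term $H\delta_{x_\infty}$, with $H\leq 0$, yielding \eqref{LiouvilleSingularEquationInZero} for $V$; the integrability $\int e^V<+\infty$ follows from \eqref{assumptionEnergyGenerale} and Fatou.

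Finally I would invoke symmetry. Rotations in $G$ fix $O$, so in the coordinates centered at $x_p^-$ and rescaled by $\mu_p^-$ they converge to rotations around $x_\infty$, and $V$ inherits a $G$-invariance around $0$. The Prajapat--Tarantello classification \cite{PrajapatTarantello} then forces $V$ to be either the radial singular Liouville solution $V_{rad,\ell}$ (with an $\ell$-dependent $H$), or one of the non-radial $(\eta+1)$-symmetric solutions $V_{\eta,\ell}$, with $H=-4\pi\eta$ and $\eta+1$ an integer multiple of $|G|$; the explicit constants $\alpha,\beta,\lambda,c$ are pinned down by the normalization $v_p^-(0)=0$, i.e.\ by placing the finite critical point of $V$ at $-x_\infty$. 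I expect the main obstacle to be the sharp identification of the Dirac mass, and in particular the strict inequality $H<0$: this amounts to showing that a definite fraction of the positive bubble does not disappear into oscillatory cancellation, and it rests on the concentration rate $\mu_p^+/\mu_p^-\to 0$ together with the quantitative estimates in Proposition \ref{rem:x^+=x_1}.
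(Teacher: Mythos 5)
Your overall scheme (rescale, bound the Laplacian away from the singularity via Proposition \ref{prop:NodalLineShrinks}, pass to a $C^1_{loc}$ limit, identify a singular Liouville solution, classify by \cite{PrajapatTarantello}, and pin down the constants from the normalization at $-x_\infty$) matches the paper's, but you diverge at the one step you yourself flag as the main obstacle: proving that $V$ is genuinely singular, i.e.\ $H<0$ strictly. You propose to compute the Dirac mass directly, by testing the equation and tracking the concentrated mass $\tfrac{p}{|u_p(x_p^-)|}\int_\Omega (u_p^+)^p\,dy$ of the positive bubble. The paper avoids this entirely: it first shows $e^V\in L^1(\R^2)$ from \eqref{assumptionEnergyGenerale}, then argues by contradiction that $V$ cannot extend to a classical solution on all of $\R^2$ (otherwise $V=U(\cdot+x_\infty)$, and \cite[Proposition 3.8]{DeMarchisIanniPacella2} would force $|x_p^-|/\mu_p^-\to0$, contradicting Proposition \ref{prop:NLp+l>0}), and then invokes \cite{ChenLi2,ChouWan1,ChouWan2} and \cite{ChenLi} to conclude that any non-removable singularity of a finite-mass solution of $-\Delta V=e^V$ in $\R^2\setminus\{0\}$ with $V\le 0$ automatically carries a quantized mass $-4\pi\eta\delta_0$ with $\eta>0$. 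The paper's route is cleaner because the strict negativity of $H$ comes for free from the classification; your route is workable (Fatou applied to $v_p^+\to U$ gives $\tfrac{p}{|u_p(x_p^-)|}\int(u_p^+)^p\ge 8\pi\tfrac{\|u_p^+\|_\infty}{\|u_p^-\|_\infty}(1+o_p(1))\ge 8\pi(1+o_p(1))$, so the limiting point mass is at most $-8\pi$), but it obliges you to establish tightness of the measures $-\Delta v_p^-$ near $x_\infty$ and to control possible mass at intermediate scales, which the paper never needs to do.

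Two smaller points. First, you invoke $\mu_p^+/\mu_p^-\to0$ to localize the positive bubble near $x_\infty$; this ratio is not known to vanish at this stage of the argument (only $\mu_p^+\le\mu_p^-$ is immediate), and it is also unnecessary: Proposition \ref{prop:NodalLineShrinks} together with $O\in\mathcal{N}_p^+$ already gives $\mathcal{N}_p^+\subset B_{o(\mu_p^-)}(0)$, hence the rescaled positive region collapses onto $\{x_\infty\}$. Second, the asymptotic identity $\int(u_p^+)^p\simeq 8\pi|u_p(x_p^+)|/p$ does not follow from Proposition \ref{rem:x^+=x_1} alone (which only yields the Fatou lower bound); the matching upper bound needs $(\mathcal{P}_3^1)$ from Proposition \ref{prop:x1N} and $k=1$ from Proposition \ref{regioneNodaleInterna}. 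For the purpose of $H<0$ the lower bound suffices, so this does not break your argument, but the attribution should be corrected.
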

\begin{proof}
Let us consider the translations of \eqref{scalingNegativo}:
\[
s_p^-(x):=v_p^-\left(x-\frac{x_p^-}{\mu_p^-} \right)=\fr{p}{\upp(x_{p}^-)}(\upp(\mu_p^- x)-\upp(x_p^-)),\quad \quad x\in \frac{\Omega}{\mu_p^-}
\]
which solve
$$
\left\{
  \begin{array}{ll}
    -\lap s_p^-(x)=\left|1+\frac{s_p^-(x)}{p}    \right|^{p-1}\left(1+\frac{s_p^-(x)}{p}    \right) & \hbox{$x\in\frac{\Omega}{\mu^-_p}$} \\
    s^-_p(\frac{x^-_p}{\mu^-_p})=0 & \\
    s^-_p(x)\leq0 & \hbox{$x\in\frac{\Omega}{\mu^-_p}.$}
  \end{array}
\right.
$$

Observe that $\frac{\Omega}{\mu_p^-}\rightarrow \mathbb R^2$ as $p\to+\infty$.

We claim that for any fixed $r>0$, $|-\Delta s_p^-|$ is bounded in $\frac{\Omega}{\mu_p^-}\setminus B_r(0)$. \\
Indeed Proposition \ref{prop:NodalLineShrinks} implies that if $x\in \frac{\mathcal{N}^+_p}{\mu_p^-}$,  then $|x|\leq\fr{\max\limits_{z_p\in NL_p}|z_p|}{\mu^-_p}<r,$ for $p$ large, hence
\[
\left( \frac{\Omega}{\mu_p^-}\setminus B_r(0) \right)
\subset\frac{\mathcal{N}^-_p}{\mu_p^-}\ \ \ \mbox{ for } p \mbox{ large}
\]
and so the claim follows observing that for $x\in \frac{\mathcal{N}^-_p}{\mu_p^-}$, then $|-\Delta s_p^-(x)|\leq1$.\\
Hence, by the arbitrariness of $r>0$ we have that $s_p^-\rightarrow V$ in $C^1_{loc}(\mathbb R^2\setminus \{0\})$ where $V$ is a solution of
$$
-\Delta V=e^{V}\ \  \mbox{ in }\ \R^2\setminus\{0\}
$$
which satisfies $V\leq0$ and $V(-x_\infty)=0$ where $x_\infty$ is defined in \eqref{x_infty}.

\

Moreover $e^{V}\in L^1(\R^2)$, indeed for any $r>0$ and for any $\ep\in(0,1)$
\begin{eqnarray*}
\int_{B_{\fr1r}(0)\setminus B_r(0)}e^{V}\,dx&\leq&\int_{B_{\fr1r}(0)\setminus B_r(0)}\fr{|\upp(\mu^-_p x)|^{p+1}}{|\upp(x^-_p)|^{p+1}}dx+o_p(1)\\
&=&\fr{p}{|\upp(x^-_p)|^2}\int_{B_{\fr{\mu^-_p}r}(0)\setminus B_{r\mu^-_p}(0)}|\upp(y)|^{p+1}dy+o_p(1)\\
&\stackrel{Lemma\:\ref{lemma:BoundEnergia}\: (i)}{\leq}&\fr{p}{(1-\ep)^2}\int_{\Omega}|\upp(y)|^{p+1}dy+o_p(1)\stackrel{ \eqref{assumptionEnergyGenerale}}{<}+\infty.
\end{eqnarray*}
Observe that if $V$ was a classical solution of  $-\Delta V=e^V$ in the whole  $\R^2$ then necessarily $V(x)=U(x+x_{\infty})$. As a consequence $v_p^-(x)=s_p^-(x+\frac{x_p^-}{\mu_p^-})\rightarrow V(x-x_{\infty})=U(x)$ in $C^1_{loc}(\R^2\setminus\{x_{\infty}\})$. Observe that since $x_{\infty}=-\lim_p \frac{x_p^-}{\mu_{p}^-}$, then \cite[Proposition 3.8]{DeMarchisIanniPacella2} applies,  implying that $\frac{|x_p^-|}{\mu_p^-}\rightarrow 0$ as $p\rightarrow +\infty$, and this is in contradiction with Proposition \ref{prop:NLp+l>0}.

Thus, by \cite{ChenLi2,ChouWan1,ChouWan2} and the classification given in \cite{ChenLi} we have that $V$ solves, for some $\eta >0$, the following entire equation
\bel
\left\{
  \begin{array}{ll}
    -\lap V=e^{V}-4\pi\eta\delta_0 & \hbox{\textrm{in $\R^2$}} \\
    \int_{\R^2}e^{V}dx=8\pi(1+\eta), & \hbox{\:}
  \end{array}
\right.
\eel
where $\delta_0$ denotes the Dirac measure centered at the origin.

\

Since $s^-_p$ is $G$-symmetric, by the classification of \cite{PrajapatTarantello} either $V$ is radial or $\frac{\eta+1}{|G|}\in\N$ and $V$ is $(\eta+1)$-symmetric.

\

If $V$ is radial, then $V(r)$ satisfies
\[
\left\{
\begin{array}{lr}-V''-\frac{1}{r}V'=e^{V}\  \mbox{ in } (0, +\infty)\\
V\leq 0\\
V(\ell)=V'(\ell)=0
\end{array}
\right..
\]
The solutions of this problem are
\begin{equation}\label{soluzioneGenerale}
V(r)=\log\left(\frac{4}{\delta^2}\frac{e^{\frac{\sqrt{2}}{\delta}(\log r-y))}}{\left( 1+e^{\frac{\sqrt{2}}{\delta}(\log r-y))}\right)^2}   \right)-2\log r
\end{equation}
for $\delta>0, y\in\mathbb R$.

Observe that from $V'(r)=0$ we get $\frac{1-\sqrt{2}\delta}{1+\sqrt{2}\delta}=e^{\frac{\sqrt{2}}{\delta}(\log r-y)}$ and moreover $V(r)=0$ for $r=\frac{\sqrt{1-2\delta^2}}{\delta}$.
Hence by $V(\ell)=V'(\ell)=0$ it follows that $\ell^2=\frac{1-2\delta^2}{\delta^2}$ which implies that $\delta=\frac{1}{\sqrt{2+\ell^2}}$.
Inserting this estimate into \eqref{soluzioneGenerale} we get
\[
V(r)=\log\left(\frac{2\alpha^2\beta^{\alpha}r^{\alpha -2}}{(\beta^{\alpha}+r^{\alpha})^2} \right),
\]
where $\alpha=\sqrt{2\ell^2+4}$ and $\beta=\ell \left(\frac{\alpha+2}{\alpha-2} \right)^{1/\alpha}$.

\

On the other hand if $\frac{\eta+1}{|G|}\in\N$ and $V$ is $(\eta+1)$-symmetric then there exists $\lambda>0$ and $c\in \C\setminus\{0\}$ such that in complex notation
$$
V(z)=\log\left(\frac{8(\eta+1)^2\lambda|z|^{2\eta}}{(1+\lambda|z^{\eta+1}-c|^2)^2}\right),
$$
moreover $V(-x_\infty)=0$ and $V(z)\leq0$ for any $z\in\C$.

Let $\zeta\in\C$ such that $\zeta^{\eta+1}=c$ and $\zeta=\sqrt[\eta+1]{|c|}e^{i\theta}$, $\theta\in[\theta_\infty-\frac\pi{\eta+1},\theta_\infty+\frac\pi{\eta+1})$, where $-x_\infty=\ell e^{i\theta_{\infty}}$.\\
We first claim that
\begin{equation}\label{claim1}
\zeta=\sqrt[\eta+1]{|c|}e^{i\theta_\infty}.
\end{equation}
Let us suppose by contradiction that $\zeta=\sqrt[\eta+1]{|c|}e^{i\theta}$, $\theta\neq\theta_\infty$. We set $d:=\partial B_\ell(0)\cap\{t\zeta:t>0\}$. We know that $0=V(-x_\infty)\geq V(d)$ and since $|-x_\infty|^{2\eta}=|d|^{2\eta}=\ell^{2\eta}$, then $|(-x_\infty)^{\eta+1}-c|\leq|d^{\eta+1}-c|$ but this is false because $|d^{\eta+1}|=|(-x_\infty)^{\eta+1}|=\ell^{\eta+1}$ and $d^{\eta+1}=(\frac{|d|}{|\zeta|})^{\eta+1}c$. This proves \eqref{claim1}.

Next, in order to compute $\lambda$ and $c$ in terms of $x_\infty$ and $\eta$ we set:
$$
w=z e^{-i\theta_\infty}\quad\textrm{and}\quad\tilde V(w):=V(z)=\log\left(\frac{8(\eta+1)^2\lambda|w|^{2\eta}}{(1+\lambda|w^{\eta+1}-\tilde c|^2)^2}\right),
$$
where $\tilde c=e^{-i(\eta+1)\theta_\infty}c\in\R^+$.

Let us consider the restriction of the argument of the logarithm to the positive real line, namely $g(s):=\frac{8(\eta+1)^2\lambda s^{2\eta}}{(1+\lambda(s^{\eta+1}-\tilde c)^2)^2}$, $s\in(0,+\infty)$. Being $\tilde V(\ell)=V(-x_\infty)=0=\max_\C\tilde V$ we have that $g(\ell)=1$ and $g'(\ell)=0$. Imposing these two conditions we get
\begin{equation}\label{**}
8(\eta+1)^2\lambda\ell^{2\eta}=(1+\lambda(\ell^{\eta+1}-\tilde c)^2)^2,
\end{equation}
\begin{equation}\label{*}
2\eta(1+\lambda(\ell^{\eta+1}-\tilde c)^2)^2-4(\eta+1)\lambda\ell^{\eta+1}(1+\lambda(\ell^{\eta+1}-\tilde c)^2)(\ell^{\eta+1}-\tilde c)=0,
\end{equation}
and in turn combining \eqref{*} and \eqref{**} we derive
\begin{equation}\label{***}
(\ell^{\eta+1}-\tilde{c})\sqrt\lambda=\frac{\sqrt2\eta}{\ell}.
\end{equation}
Substituting \eqref{***} in \eqref{**} we get
\begin{equation}\label{****}
\lambda=\frac{(\ell+2\eta^2)^2}{8(\eta+1)^2\ell^{2\eta+4}},
\end{equation}
in turn by \eqref{***} and \eqref{****} we derive $\tilde c=\ell(1-\frac{4\eta(\eta+1)}{\ell^2+2\eta^2})$. Thus finally we have $c=(-x_\infty)^{\eta+1}(1-\frac{4\eta(\eta+1)}{\ell^2+2\eta^2})$.
\end{proof}

\

\begin{proof}[Proof of Theorem \ref{teoremaIndMorHpDeb}.]
 It follows from all previous results. More precisely, i) follows from \eqref{boundDaQ1} and Lemma \ref{lemma:BoundEnergia}. The statement ii) derives from Proposition \ref{prop:NodalLineShrinks}. The asymptotic behavior of the rescaled functions $v_p^+$ and $v_p^-$ are shown in Proposition \ref{rem:x^+=x_1} and Proposition \ref{prop:scalingNegativo}
%, to this end we point out that the functions $s^-_p$, defined in \eqref{scalingNegativo}, are just translations of $v^-_p$, indeed $s^-_p(x)=v^-_p(x-\frac{x^-_p}{\mu^-_p})$.
Finally v) is a consequence of Proposition \ref{prop:x1N}.
\end{proof}

\


\begin{thebibliography}{99}
\bibitem{AftalionPacella} A. Aftalion, F. Pacella, \emph{Qualitative properties of nodal solutions of semilinear elliptic equations in radially symmetric domains},
    Comptes Rendus Mathematique \textbf{339}  (2004),  339-344.

\bibitem{BenciFortunato} V. Benci, D. Fortunato
\emph{A remark on the nodal regions of the solutions of some superlinear elliptic equations}, Proc. Roy. Soc. Edinburgh Sect. A 111 (1989), no. 1-2, 123-128.

\bibitem{ChenLi} W.X. Chen, C. Li, \emph{Classification of solutions of some nonlinear elliptic equations}, Duke Math. J. 63 (1991), 615-622.

\bibitem{ChenLi2} W.X. Chen, C. Li, \emph{Qualitative properties of solutions of some nonlinear elliptic equations}, Duke Math. J. 71 (1993), 427-439.

\bibitem{ChouWan1} K.S. Chou, T. Wan, \emph{Asymptotic radial symmetry for solutions of $\Delta u+e^u=0$ in a punctured disc}, Pacific J. Math. 163 (1994), no. 2, 269-276.

\bibitem{ChouWan2} K.S. Chou, T. Wan, \emph{Correction to: "Asymptotic radial symmetry for solutions of $\Delta u+e^u=0$ in a punctured disc'' [Pacific J. Math. 163 (1994), no. 2, 269-276]}, Pacific J. Math. 171 (1995), no. 2, 589-590.

\bibitem{DeMarchisIanniPacella1} F. De Marchis, I. Ianni, F. Pacella, \emph{Sign changing solutions of Lane Emden problems with interior nodal line and semilinear heat equations}, Journal of Differential Equations 254 (2013), 3596-3614.


\bibitem{DeMarchisIanniPacella2} F. De Marchis, I. Ianni, F. Pacella, \emph{Asymptotic analysis and sign changing bubble towers for Lane-Emden problems}, J. Eur. Math. Soc., to appear. arXiv:1309.6961.

\bibitem{Druet} O. Druet, \emph{Multibumps analysis in dimension 2: quantification of blow-up levels}, Duke Math. J. 132 (2006), no. 2, 217-269.

\bibitem{GrossiGrumiauPacella2}
M. Grossi, C. Grumiau, F. Pacella, \emph{Lane Emden problems with large exponents and singular Liouville equations}, J. Math. Pures Appl. 101 (2014), no. 6, 735-754.


\bibitem{PrajapatTarantello}
J. Prajapat, G. Tarantello, \emph{On a class of elliptic problems in $\R^2$: Symmetry and Uniqueness results}, Proc. Roy. Soc. Edinburgh 131A (2001), 967-985.

\end{thebibliography}
\end{document}